\renewcommand{\phi}{\varphi}
\DeclareMathOperator{\stab}{stab}
\crefname{problem}{Problem}{Problems}
\crefname{conjecture}{Conjecture}{Conjectures}
\begin{document}
\date{}
\title{On the Congruency-Constrained Matroid Base}

\author{Siyue Liu\thanks{\email{siyueliu@andrew.cmu.edu}, Carnegie Mellon University.} \and Chao Xu\thanks{\email{the.chao.xu@gmail.com}, University of Electronic Science and Technology of China.}}

\maketitle              % typeset the header of the contribution
\begin{abstract}
Consider a matroid where all elements are labeled with an element in $\mathbb{Z}$. We are interested in finding a base where the sum of the labels is congruent to $g \pmod m$. We show that this problem can be solved in $\tilde{O}(2^{4m} n r^{5/6})$ time for a matroid with $n$ elements and rank $r$, when $m$ is either the product of two primes or a prime power. The algorithm can be generalized to all moduli and, in fact, to all abelian groups if a classic additive combinatorics conjecture by Schrijver and Seymour holds true. We also discuss the optimization version of the problem.
\end{abstract}
%
%
%
%\section{Todo}

%\begin{enumerate}
%\item Try to show Z_5 works.
%\item Compute a better bound for the running time, which is the Apéry numbers \url{https://oeis.org/A005258} , which is basically bounded by $12^n$ instead of what we have here, which is $16^n$.
%\item we might be able to figure out the actual exponents in the lower bound
%\item Discuss the Z_6 counterexample to strongly block isolating
%\end{enumerate}

% some useful information
% Z_3 tooks 0.11 seconds to verify
% Z_4 tooks 23.11 seconds to verify
% Z_5 tooks ? seconds to verify

\section{Introduction}

Recently, there has been a surge of work on congruency-constrained combinatorial optimization problems, such as submodular minimization \cite{nagele_submodular_2019}, constraint satisfaction problems \cite{BrakensiekGG22}, and integer programming over totally unimodular matrices \cite{ArtmannWZ17,NageleSZ22}. In this paper, we consider congruency-constrained matroid base problems.

As a motivation, consider the exact matching problem, which asks for a perfect matching in a red-blue edge-colored bipartite graph with exactly $k$ red edges \cite{PapadimitriouY82}. A more general variant, where each edge has a weight of polynomial size and the goal is to find a perfect matching with an exact weight, has a randomized polynomial-time algorithm \cite{10.1145/28395.383347}. Exact matching is a special case of the exact matroid intersection problem, where the goal is to find a common base of a particular weight. Webb provided an algebraic formulation of the exact base problem that leads to an efficient solution when the matroid pair is Pfaffian and the weights are small \cite{WebbKerri2004}. For a single matroid, the exact base problem is defined as follows.

\begin{problem}[Exact Matroid Base]
Given a matroid $M=(E,\mathcal{I})$, a natural number $t$, and a label function $\ell: E \to \mathbb{N}$. Find a base $B$ in $M$ such that $\sum_{x \in B} \ell(x) = t$, if one exists.
\end{problem}

The problem is NP-hard, as the classical subset sum problem can be reduced to it. However, the subset sum and its optimization variant, the knapsack problem, can be solved in pseudopolynomial time \cite{Bellman1956}. Thus, Papadimitriou and Yannakakis questioned whether the exact matroid base could be solved in pseudopolynomial time \cite{PapadimitriouY82}. Barahona and Pulleyblank provided an affirmative answer for graphic matroids \cite{barahonaExactArborescencesMatchings1987}. Subsequently, Camerini and Maffioli demonstrated that for matroids representable over the reals, there exists a randomized pseudopolynomial time algorithm for the exact matroid base problem \cite{CameriniGM13}. Recently, Doron-Arad, Kulik, and Shachnai showed that pseudopolynomial time algorithms for arbitrary matroids cannot exist, demonstrated through carefully constructed paving matroids \cite{doronarad2023tight}. For one optimization variant, where the goal is to maximize the value of a base while adhering to an upper bound on the budget, some approximation results are known \cite{matroidalknapsack,multi-constrained_1989}.

Recent progress has involved relaxing the exact constraint to a modular constraint to achieve interesting advancements. For instance, finding a perfect matching with an even number of red edges has been shown to be solvable in deterministic polynomial time \cite{elmaalouly_et_al:LIPIcs.ISAAC.2023.28}. Hence, we relax the exact constraint and study the following problem:

\begin{problem}[$m$-Congruency-Constrained Matroid Base ($\operatorname{CCMB}(m)$)]
Let $M=(E,\mathcal{I})$ be a matroid, $g \in \{0, \ldots, m-1\}$, and $\ell:E \to \mathbb{Z}$ be a label function. Find a base $B$ in $M$ such that $\sum_{x\in B} \ell(x) \equiv g \pmod{m}$, if any.
\end{problem}

As far as we know, except for results implied by the exact matroid base problem, little is known about this problem. An algorithm with a running time polynomial in $m$ and the size of the matroid cannot exist, as it would imply the existence of a pseudopolynomial time algorithm for the exact matroid base problem. We aim for the next best thing: does there exist a fixed-parameter tractable (FPT) algorithm for $\operatorname{CCMB}(m)$, parameterized by $m$?

Similar to \cite{naegele_2023_advances}, we consider finite abelian groups instead of solely congruency constraints. For a finite abelian group $G$, let $\ell:E\rightarrow G$ be a label function mapping each element of the matroid to an element of $G$. $\ell$ is called a \emph{$G$-labeling} of $E$. 
A base with label sum $\sum_{x\in B} \ell(x)=g$ is called a \emph{$g$-base}. 
Our goal can be summarized as determining the complexity of the following two problems regarding finding $g$-bases, parameterized by some finite abelian group $G$.

\begin{problem}[Group-Constrained Matroid Base ($\operatorname{GCMB}(m)$)]\label{prob:feas}
Given $g \in G$, a matroid $M = (E,\mathcal{I})$, and a label function $\ell:E \rightarrow G$, find a $g$-base, if one exists.
\end{problem}

\begin{problem}[Group-Constrained Optimum Matroid Base ($\operatorname{GCOMB}(m)$)]\label{prob:opt}
Given $g \in G$, a matroid $M = (E,\mathcal{I})$, a label function $\ell:E \rightarrow G$, and a weight function $w:E \rightarrow \mathbb{R}$, find a $g$-base of minimum weight, if one exists.
\end{problem}

The closest study related to matroid bases and group constraints comes from the additive combinatorics community, where theorems considering the number of different labels attainable by the bases have been discovered \cite{schrijver1990spanning,devos2009generalization}. Recently, H\"orsch et al. \cite{horsch2024problems} study generalized problems where the label sum of the bases is allowed to take a subset of $G$ instead of a single element. The same problem for the common bases of two matroids is also studied.

\paragraph{Our Contribution.}
We show that $\operatorname{GCMB}(m)$ can be solved in $\tilde{O}(2^{4|G|}n r^{5/6})$ time for a group $G$, if either the size of $G$ equals the product of two primes, or $G$ is a cyclic group of size a power of a prime. In particular, this proves that $\operatorname{CCMB}(m)$ is in FPT parameterized by $m$ if $m$ is the product of two primes or a prime power. 

These results are obtained through proximity results. For this, let $\mathcal{B}$ be the set of bases of $M$. Let $\mathcal{B}(g)$ be the collection of all $g$-bases of $M$. For a weight function $w:E \rightarrow \mathbb{R}$, denote by $\mathcal{B}^* := \arg\min_{B \in \mathcal{B}} w(B)$ and $\mathcal{B}^*(g) := \arg\min_{B \in \mathcal{B}(g)} w(B)$ the sets of \emph{optimum bases} and \emph{optimum $g$-bases}, respectively. We introduce the following key concepts of this paper.
\begin{definition}
    A $G$-labeling $\ell$ of matroid $M$ is \textbf{$k$-close} if for every $g \in G$ such that a $g$-base exists, every base has a $g$-base differing from it by at most $k$ elements. If every $G$-labeling is $k$-close for every matroid, then $G$ is \textbf{$k$-close}.
\end{definition}

\begin{definition}
    A $G$-labeling $\ell$ of matroid $M$ is \textbf{strongly $k$-close} if for every weight $w: E\rightarrow \R$ and $g \in G$ such that a $g$-base exists, every optimum base has an optimum $g$-base differing from it by at most $k$ elements. If every $G$-labeling is strongly $k$-close for every matroid, then $G$ is \textbf{strongly $k$-close}.
\end{definition}

Note that being strongly $k$-close implies being $k$-close by setting $w \equiv 0$. We prove that $G$ is $(|G|-1)$-close if either the size of $G$ equals the product of two primes, or $G$ is a cyclic group of size a power of a prime (\Cref{cor:closeness}). More generally, we show that every group is $(|G|-1)$-close assuming a conjecture by Schrijver and Seymour (\Cref{conj:schrijver_seymour}). This gives rise to the desired FPT algorithm for $\operatorname{GCMB}(m)$ (\Cref{thm:FPT}). Lastly, we prove the strong $k$-closeness for strongly base orderable matroids (\Cref{thm:stronglyorderable}) and small groups, for some $k\leq |G|$ which only depends on $G$. These lead to FPT algorithms for $\operatorname{GCOMB}(m)$ for those matroids and groups.

\paragraph{Overview.}
In \Cref{sec:prelminaries}, we define the problem and provide the necessary background in matroid and group theory. In \Cref{sec:gcomb}, we discuss established algorithms and demonstrate the existence of an FPT algorithm for $\operatorname{GCMB}(m)$ and $\operatorname{GCOMB}(m)$, provided that $G$ is $k$-close and strongly $k$-close, respectively, for some $k$ which only depends on $G$. Starting
 from Section \ref{sec:minexample}, we prove the proximity results. In \Cref{sec:minexample}, we consider what would constitute the minimum counterexample to the statement that $G$ is (strongly) $k$-close. We observe that the minimum counterexample would have to be a block matroid of rank $k+1$. In \Cref{sec:kclose}, we prove the $(|G|-1)$-closeness for certain groups. Finally, \Cref{sec:stronglykclose} discusses the optimization version and results on strong $k$-closeness.

\paragraph{Notation.} We are given a matroid $M=(E,\mathcal{I})$, a group $G$, and a label function $\ell: E\rightarrow G$. For any $F\subseteq E$, denote by $\ell(F):=\sum_{e\in F} \ell(e)$. For a subset $H\subseteq G$, denote by $E(H):=\ell^{-1}(H)=\{e\in E\mid \ell(e)\in H\}$ all the elements with labels in $H$. If $H=\{g\}$ is a singleton, we shorten $E(\{g\})$ to $E(g)$. Denote by $\ell(M)$ the set $\{\ell(B)\mid B$ is a base of $M\}$. For $F\subseteq E$, denote the rank of $F$ in $M$ by $r_M(F)$, where $M$ in the rank function $r_M(\cdot)$ can be omitted if the context is clear. Denote the rank of matroid $M$ by $r(M):=r_M(E)$.

\section{Preliminaries}\label{sec:prelminaries}

Let $M=(E,\mathcal{I})$ be a matroid,  where $E$ is the ground set and $\mathcal{I}$ is the family of independent sets. For a subset $F \subseteq E$, denote by $M \setminus F$ the \emph{deletion} minor of $M$ defined on the ground set $E\setminus F$ whose independent sets are those of $M$ restricted to $E \setminus F$. If $F \subseteq E$ is an independent set of $M$, denote by $M/F$ the \emph{contraction} minor of $M$ defined on the ground set $E\setminus F$. A subset $A \subseteq E \setminus F$ is independent in $M/F$ if and only if $A \cup F$ is independent in $M$. Their rank functions satisfy the relation $r_{M/F}(A) = r_M(A \cup F) - r_M(F)$. We assume the matroid is loopless throughout, which means $\{e\} \in \mathcal{I}$ for any $e \in E$.

A matroid possesses the base exchange property: For any two bases $A$ and $B$, there exists a series of elements $a_1, \ldots, a_k \in A \setminus B$ and $b_1, \ldots, b_k \in B \setminus A$ such that $A - \{a_1,\ldots,a_j\} + \{b_1,\ldots,b_j\}$ is also a base for each $j = \{1, \ldots, k\}$, and $A - \{a_1,\ldots,a_k\} + \{b_1,\ldots,b_k\} = B$. The distance $d(A,B)$ between two bases $A$ and $B$ is the number of steps needed to exchange elements from $A$ to $B$, which equals $|A \setminus B|$.

A matroid is \emph{strongly base orderable} if for any two bases $A$ and $B$, there is a bijection $f:A\to B$ such that for any $X\subseteq A$, we have $A-X+f(X)$ is a base. A function with the previous property can be taken to satisfy $f(a)=a$ for all $a\in A\cap B$. 
A matroid is a \emph{block matroid} if the ground set is the union of two disjoint bases, and such disjoint bases are called \emph{blocks}. 

Given a group $G$ and a normal subgroup $H\subseteq G$, denote by $G/H$ the quotient group consisting of equivalence classes of the form $gH:=\{g+h\mid h\in H\}$, which are called the \emph{cosets} of $H$, where $g_1$ is equivalent to $g_2$ if and only if $g_1^{-1}g_2\in H$. For a subset $F\subseteq G$, the \emph{stabilizer} of $F$ is defined by $\stab(F):=\{g\in G\mid g+F=F\}$. It is easy to see that $\stab(F)$ is a subgroup of $G$.

The Davenport constant \cite{davenport1966midwestern,olson1969combinatorial} of $G$, denoted by $D(G)$, is the minimum value such that every sequence of elements from $G$ of length $D(G)$ contains a non-empty subsequence that sums to $0$. In other words, the longest sequence without non-empty subsequence that sums to $0$ has length $D(G)-1$. 
% It holds that $D(G)\leq |G|$, and equality holds when $G$ is cyclic \cite{van1969combinatorial}. 
By the fundamental theorem of finite abelian groups, any finite abelian group $G$ can be decomposed into $G=\Z_{m_1}\times \ldots \times \Z_{m_r}$ where $1\mid m_1 \mid m_2\mid...\mid m_r$. Here $\Z_m$ is the group of integers modulo $m$, and $\times$ is the group direct product. It is shown that a lower bound of $D(G)$ is $M(G) := \sum_{i}^r (m_i - 1) + 1$. It was proved independently by Olson \cite{olson1969combinatorial} and Kruswijk \cite{baayen1968een} that $D(G) = M(G)$ for \emph{$p$-groups}, in which the order of every element is a power of $p$, and for $r=\{1,2\}$. 
It also holds that $D(G)\leq |G|$, where equality holds for all cyclic groups \cite{van1969combinatorial}.

Our paper makes progress on the following two conjectures. {\footnote{In an earlier preprint version of this paper, we conjectured that every finite abelian group is (strongly) $(D(G)-1)$-close, but this was recently disproved by \cite{horsch2024problems}.}}
\begin{conjecture}[Feasibility]\label{conj:feasibility}
Every finite abelian group $G$ is $(|G|-1)$-close. 
\end{conjecture}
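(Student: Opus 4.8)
The plan is to push a putative counterexample to \Cref{conj:feasibility} down to an object with almost no freedom --- a block matroid of rank $|G|$ --- and then contradict it using additive combinatorics: zero-sum sequences (controlled by the Davenport constant) for well-structured matroids, and a matroid analogue of Kneser's addition theorem in general. For \textbf{Step 1} (reduction to block matroids), suppose $G$ is not $(|G|-1)$-close and choose a counterexample $(M,\ell,B,g)$ --- a matroid with base $B$, $G$-labeling $\ell$, and $g\in\ell(M)$ for which every $g$-base is at distance at least $|G|$ from $B$ --- minimizing $|E(M)|$. Then $M$ has no coloop, since contracting one shifts every base-sum by a fixed group element and preserves all distances, yielding a smaller counterexample; and no element of $B$ lies in any $g$-base, since contracting such an element $a$ gives the smaller counterexample $(M/a,\,\ell|_{E-a},\,B-a,\,g-\ell(a))$. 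Hence every $g$-base is disjoint from $B$, and any element lying outside both $B$ and some fixed $g$-base may be deleted without losing the counterexample. Therefore $E(M)=B\sqcup B'$ where $B'$ is a $g$-base; this $g$-base is unique; $M$ is a block matroid with blocks $B,B'$; and its rank $r=|B|$ equals $d(B,B')\geq|G|$, so in particular $\ell(B)\neq g=\ell(B')$. (A further truncation towards $B$ lets one take $r=|G|$, but the steps below only use $r\geq|G|$.) It thus suffices to rule out such block-matroid configurations.

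For \textbf{Step 2}, the strongly base orderable case: fix a bijection $f:B\to B'$ with $B-X+f(X)$ a base for every $X\subseteq B$, and set $d_a:=\ell(f(a))-\ell(a)$ for $a\in B$. Then $\sum_{a\in B}d_a=\ell(B')-\ell(B)=g-\ell(B)\neq 0$, while the multiset $\{d_a:a\in B\}$ has $r\geq|G|\geq D(G)$ elements, so by the definition of the Davenport constant it contains a nonempty zero-sum subset, indexed by some $Y\subseteq B$; as the full sum is nonzero, $Y\neq B$. Writing $X:=B\setminus Y$ (so $\emptyset\neq X\subsetneq B$), the base $C:=B-X+f(X)$ satisfies $\ell(C)=\ell(B)+\sum_{a\in X}d_a=\ell(B')=g$ with $C\notin\{B,B'\}$, a second $g$-base --- contradiction. (For strongly base orderable matroids this argument in fact yields $(D(G)-1)$-closeness, which is optimal in that range; by the footnote it cannot be extended to all matroids.)

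For \textbf{Step 3}, the general case, only exchange \emph{sequences} are available, so Step 2 breaks. Instead, for each $b\in B'$ the minor $M\setminus b$ has rank $r$, has $2r-1$ elements, and contains no $g$-base (every $g$-base of $M$ equals $B'\ni b$), so $g\notin\ell(M\setminus b)$. Apply the matroid Kneser-type bound of Schrijver and Seymour (\Cref{conj:schrijver_seymour}) to $M\setminus b$: either it already forces $\ell(M\setminus b)=G\ni g$, a contradiction, or $\ell(M\setminus b)$ is a union of cosets of a nontrivial subgroup $H:=\stab(\ell(M\setminus b))$ avoiding $g+H$. In the latter case we quotient by $H$: the induced $(G/H)$-labeling together with the distribution of $B'$ over the cosets of $H$ reduces the problem to $(|H|-1)$- and $(|G/H|-1)$-closeness for the strictly smaller groups $H$ and $G/H$, which close the induction (base case $|G|=1$ trivial), noting $(|H|-1)+(|G/H|-1)\leq|G|-1$. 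For $p$-groups and for groups with at most two invariant factors --- in particular every group whose order is a product of two primes and every cyclic group of prime-power order --- the matroid Kneser bound is known unconditionally (building on Olson's $D(G)=M(G)$ and the work of Schrijver, Seymour, and DeVos--Goddyn--Mohar), so the argument is unconditional and establishes \Cref{conj:feasibility} for those $G$ (cf.\ \Cref{cor:closeness}). The main obstacle is precisely this last ingredient: without arbitrary-subset base exchange one cannot turn a long exchange sequence into a zero-sum configuration, so one is forced into the matroid analogue of Kneser's theorem, whose validity for \emph{every} finite abelian group is exactly the open Schrijver--Seymour conjecture; the remaining difficulty is to make the quotient step in Step 3 preserve the block-matroid structure and the uniqueness of $B'$.
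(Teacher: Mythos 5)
You should first be aware that this statement is a \emph{conjecture} in the paper: it is proved only conditionally on \Cref{conj:schrijver_seymour} (\Cref{thm:Schrijver_Seymour_pq}), with unconditional cases $|G|=pq$ and $G=\Z_{p^n}$ (\Cref{cor:closeness}). Your Step 1 is a correct and essentially equivalent version of the paper's reduction (\Cref{cor:min_counterexample_feasibility}, \Cref{prop:blockisolating}), arriving at a block matroid in which the target $g$-base is the unique base with label $g$; your Step 2 is the Davenport-constant argument the paper uses only for strongly base orderable matroids, and, as you say, it does not apply to the general minimal counterexample. So everything hinges on Step 3, and there the proposal has a genuine gap.

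The dichotomy you extract from \Cref{conj:schrijver_seymour} --- either $\ell(M\setminus b)=G$ or $H=\stab(\ell(M\setminus b))$ is nontrivial --- is not what the conjecture gives: its bound can be met with $H=\{0\}$ and $|\ell(M\setminus b)|<|G|$ whenever the rank-sum term $\sum_{Q} r\big(E(Q)\setminus\{b\}\big)-r(M)+1$ is at most $|\ell(M\setminus b)|$, and then ``quotient by $H$'' does nothing. The step that actually carries the conditional proof is missing from your sketch: because $g\notin\ell(M\setminus b)$, a counting argument over the cosets of $H$ (valid also when $|H|=1$) produces a coset $Q_0=g_0H$ in which the two blocks together contain at least $|H|$ more elements than the rank of that label class; the exchange lemma (\Cref{lemma:connectivity}) then swaps $|H|$ elements of the blocks inside $Q_0$, and contracting and deleting the remaining elements yields a rank-$|H|$ block matroid whose labels, after translating by $-g_0$, lie in the subgroup $H$, so the induction hypothesis for $H$ (which needs \Cref{conj:schrijver_seymour} for $H$ and all of its subgroups) yields a second base with the label of the isolated block, the desired contradiction. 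Your replacement of this with an unconstructed ``reduction to $(|H|-1)$- and $(|G/H|-1)$-closeness'' plus the inequality $(|H|-1)+(|G/H|-1)\le |G|-1$ is not supported by any exchange or minor construction; note also that the actual induction runs only on the subgroup $H$, never on $G/H$. This is precisely the difficulty you flag in your last sentence, but it is the heart of the proof rather than a remaining detail; and even with it filled in, the result stays conditional on \Cref{conj:schrijver_seymour} for general $G$, unconditional only in the cases of \Cref{cor:closeness}.
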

\begin{conjecture}[Optimization]\label{conj:optimization}
Every finite abelian group $G$ is strongly $(|G|-1)$-close. 
\end{conjecture}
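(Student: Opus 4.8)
The plan is to follow the feasibility blueprint of \Cref{sec:minexample}--\Cref{sec:kclose}, upgraded to carry a weight function. Suppose $G$ were not strongly $(|G|-1)$-close and take a counterexample $(M,\ell,w,g,B^*)$, with $B^*$ an optimum base all of whose optimum $g$-bases lie at distance at least $|G|$ from it, minimizing $|E|$ and then $r(M)$. Let $B'$ be an optimum $g$-base nearest to $B^*$. If $S:=B^*\cap B'\ne\emptyset$, I would contract it: since $B^*$ is a global minimum-weight base it is also minimum among bases containing $S$, and likewise $B'$ is minimum among label-$g$ bases containing $S$, so in $M/S$ the sets $B^*\setminus S$ and $B'\setminus S$ are again an optimum base and an optimum $(g-\ell(S))$-base at the same distance, contradicting minimality. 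Hence $B^*\cap B'=\emptyset$ and $r(M)=d(B^*,B')\ge|G|$; a connectivity/deletion argument of the kind used in \Cref{sec:minexample} should then force $E=B^*\sqcup B'$, so that $M$ is a block matroid of rank exactly $|G|$ whose blocks are the minimum-weight base $B^*$ and the minimum-weight label-$g$ base $B'$, and moreover $B'$ is the \emph{unique} optimum $g$-base (any other would meet $B^*$ and hence be closer).

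Next I would isolate the case that is already within reach. If some minimum-weight base of $M$ has label $g$, then every optimum $g$-base is in fact minimum-weight; writing $E=E_1\sqcup\dots\sqcup E_t$ for the weight classes in increasing order and $M_i:=(M/E_{<i})|E_i$, the minimum-weight bases of $M$ are exactly the bases of $N:=M_1\oplus\dots\oplus M_t$ (after discarding loops), distances and the labeling $\ell$ are inherited, and the optimum $g$-bases of $M$ are precisely the $g$-bases of $N$. Applying \Cref{conj:feasibility} to the matroid $N$ then produces a $g$-base of $N$ within distance $|G|-1$ of the base $B^*$, i.e.\ an optimum $g$-base of $M$ close to $B^*$ --- a contradiction. (For any matroid class closed under minors and direct sums, the same reduction derives this case of strong closeness from feasibility closeness.) Consequently the minimum counterexample must satisfy $w(B')>w(B^*)$: the optimum $g$-base is strictly heavier than the global optimum, and this is the heart of the matter.

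In that remaining case one wants to contradict the block-matroid picture directly. Fix a base-exchange sequence $B^*=C_0,C_1,\dots,C_{|G|}=B'$ with $C_i=B^*-\{a_1,\dots,a_i\}+\{b_1,\dots,b_i\}$; the label sums $s_i:=\ell(C_i)$ satisfy $s_0=\ell(B^*)$ and $s_{|G|}=g$, and by pigeonhole $s_p=s_q$ for some $0\le p<q\le|G|$, so the segment $(p,q]$ is label-neutral. Excising it produces the \emph{set} $\hat C:=B^*-(\{a_1,\dots,a_p\}\cup\{a_{q+1},\dots,a_{|G|}\})+(\{b_1,\dots,b_p\}\cup\{b_{q+1},\dots,b_{|G|}\})$, which has label exactly $g$ and lies at distance $|G|-(q-p)\le|G|-1$ from $B^*$. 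Two things must be arranged, over a suitable choice of exchange order and of label-neutral segment: that $\hat C$ is actually a \emph{base}, and that $w(\hat C)\le w(B')$ --- hence equal, hence $\hat C$ is an optimum $g$-base close to $B^*$, a contradiction. The first point is exactly where the feasibility argument of \Cref{sec:kclose} invokes block-matroid structure together with Schrijver--Seymour-type bounds on $\ell(M)$ (\Cref{conj:schrijver_seymour}); the second is genuinely new and is, I expect, the main obstacle: one needs that machinery refined so that the label-neutral segment can be taken inside the ``heavy'' part of the exchange, i.e.\ so that labels and weights are controlled simultaneously. A plausible route is to run a Kneser/DeVos--Goddyn--Mohar-style argument level by level on the $M_i$ (each contributing a sumset to $\ell(N)$) while tracking, along the exchange, the pair $(\,\mathrm{label},\mathrm{weight}\,)$, aiming at a weighted zero-sum inequality.

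The strongly base orderable case (\Cref{thm:stronglyorderable}) is the natural sanity check, and the one place where the weighted layer is easy: the bijection $f:B^*\to B'$ with $B^*-X+f(X)$ always a base reduces everything to choosing a sub-multiset $X\subseteq B^*$ with $|X|<D(G)\le|G|$ and $\ell(B^*)-\ell(X)+\ell(f(X))=g$ of minimum total weight $w(f(X))-w(X)$: existence of a short such $X$ is a Davenport-constant/zero-sum argument, and minimizing its weight is a routine exchange argument, since \emph{all} choices of $X$ are feasible. For general matroids this freedom is precisely what is missing --- the exchange property only hands us \emph{some} valid ordering --- so I expect progress on \Cref{conj:optimization} in full generality to hinge on the weighted additive-combinatorial step above, and in its absence the conjecture stays where the paper leaves it: settled for strongly base orderable matroids and small groups, and for all matroids only in the feasibility-reducible case.
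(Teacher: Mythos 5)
The statement you are trying to prove is \Cref{conj:optimization}, which the paper itself does not prove: it is stated as an open conjecture, and the paper only establishes special cases (strong $(D(G)-1)$-closeness for strongly base orderable matroids in \Cref{thm:stronglyorderable}, and for groups with $|G|\leq 4$ by exhaustive testing via \Cref{prop:strongblockisolation}). Your proposal does not prove it either, and you say so yourself. The parts of your outline that do go through are exactly the paper's existing machinery: the minimization argument showing a minimum counterexample is a rank-$(k+1)$ block matroid whose blocks form a witness (this is \Cref{thm:mim_counterexample_weight}, and your contraction/deletion/rank-reduction steps match it), and the strongly base orderable case via a zero-sum subsequence of length less than $D(G)$, which is precisely the proof of \Cref{thm:stronglyorderable}. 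What remains after these reductions --- producing, in a general block matroid of rank $|G|$, a $g$-base within distance $|G|-1$ of the optimum base whose weight does not exceed that of the given optimum $g$-base --- is the actual content of the conjecture, and your third paragraph only names the obstacle (the excised set $\hat C$ from the pigeonhole on partial label sums need not be a base, and its weight is uncontrolled) without overcoming it. The paper's feasibility argument in \Cref{thm:Schrijver_Seymour_pq} cannot be "upgraded to carry a weight function" in the way you hope, because the base $D$ it produces via Schrijver--Seymour comes with no weight guarantee whatsoever; this is exactly why the paper's results on strong closeness are so much weaker than its results on closeness.

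Two further points. First, your "feasibility-reducible case" (when some minimum-weight base already has label $g$) is itself conditional: it invokes \Cref{conj:feasibility}, which the paper proves only for $|G|=pq$ or $G=\Z_{p^n}$, and in general only under the unproven \Cref{conj:schrijver_seymour}. So even that branch is not unconditional, though the reduction of min-weight bases to the direct sum $M_1\oplus\cdots\oplus M_t$ over weight classes is a correct and reasonable observation (and is a genuinely different, worthwhile remark not present in the paper). Second, the final "weighted zero-sum" step you gesture at would need strictly more than a zero-sum subsequence: in a general block matroid the exchange property only provides \emph{some} ordering, not the freedom to excise an arbitrary label-neutral segment while preserving both basehood and weight-optimality --- this is the precise distinction between strongly base orderable (or $(D(G)-1)$-replaceable) matroids and the general case, and it is where any proof of \Cref{conj:optimization} must do new work.
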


Observe the conjectures are best possible when the group is cyclic. Indeed, the tight example is a block matroid of size $|G|-1$, where the first block has all its element labeled $1$ and the second has all its elements labeled $0$. Then, the closest $0$-base to the first block is the second block, which has distance $|G|-1$ from it.

\section{Algorithms for GCOMB}\label{sec:gcomb}
In this secton, we show how $k$-closeness and strong $k$-closeness of a group $G$ lead to FPT algorithms for $\operatorname{GCMB}(m)$ and $\operatorname{GCOMB}(m)$, respectively. In particular, \Cref{thm:FPT} shows that if there is a function $f$ such that $G$ is $f(G)$-close or strongly $f(G)$-close, then for a fixed group $G$, $\operatorname{GCMB}(m)$ or $\operatorname{GCOMB}(m)$ is in FPT, respectively. 

% Certainly, solving $\operatorname{GCOMB}(m)$ in polynomial time implies the solvability of $\operatorname{GCMB}(m)$ in polynomial time. 
We assume that the matroid can be accessed through an independence oracle that takes a constant time per query. Each group operation also takes $O(1)$ time.

There is a folklore algorithm which shows for any fixed group $G$, $\operatorname{GCOMB}(m)$ is solvable in polynomial time by reducing it to a polynomial number of matroid intersections. % This algorithm is very similar to rainbow spanning tree algorithm [cite].

Let $M=(E,\mathcal{I})$ be a matroid of size $n$ and rank $r$. Recall $E(g)=\ell^{-1}(g)$ for $g\in G$. For a base $B$ of matroid $M$, the vector $a\in \Z_{\geq 0}^G$ such that $a_g = |B\cap E(g)|$ is called the \emph{signature} of $B$.
For a target label $h\in G$, every $h$-base $B$ satisfies $h=\sum_{g\in G} a_g\cdot g$ for the signature $a$ of $B$. Here $a \cdot g$ is defined to be $\sum_{i=1}^{a} g$. 
For a fixed vector $a$ with $\sum_{g} a_g = r$, $0\leq a_g\leq |E(g)|$ and $\sum_{g} a_g \cdot g = h$, we can check if any base has signature $a$, in fact, find a minimum cost base with signature $a$. Indeed, one can obtain this through matroid intersection with a partition matroid, whose partition classes are $\{E(g)\mid g\in G\}$. It suffices to find an optimum base of matroid $M$ such that its intersection with $E(g)$ has cardinality $a_g$ for each $g\in G$.
So the number of vectors we need to check is bounded above by ${r+|G|-1 \choose |G|-1} \leq r^{|G|-1}$. Hence, we have to run the matroid intersection algorithm $r^{|G|-1}$ times in order to find the optimum $g$-base. Currently, the fastest algorithm for matroid intersection has running time $O(nr^{5/6} \log r)$ time assuming independence oracle takes constant time \cite{blikstad:LIPIcs.ICALP.2021.31}. Hence the total running time is $\tilde{O}(nr^{O(|G|)})$ which is polynomial as $|G|$ is fixed.
Although the algorithm is polynomial time for a fixed $G$, if we parameterize by $G$, then this algorithm is not FPT.

If $G$ is (strongly) $k$-close for a constant $k$, then there is also a polynomial time algorithm: find an (optimum) base $B$, consider all possible bases $B'$ such that $|B\setminus B'|\leq k$, and return an (optimum) base $B'$ with the desired label. The running time is $O(r^kn^k)$ by enumerating all bases $k$-close to $B$.

We show that (strong) $k$-closeness gives rise to an FPT algorithm parametrized by $G$ by combining it with the matroid intersection approach. 

\begin{theorem}\label{thm:FPT}
If $G$ is $k$-close or strongly $k$-close, then there is a $\tilde{O}(\binom{k+|G|-1}{k}^2 nr^{5/6})$ running time algorithm for $\operatorname{GCMB}(m)$ or $\operatorname{GCOMB}(m)$, respectively.
\end{theorem}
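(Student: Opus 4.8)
The plan is to combine the matroid-intersection subroutine used in the folklore algorithm with the closeness hypothesis, so as to cut the number of signatures one must try from $r^{|G|-1}$ down to a quantity depending only on $k$ and $|G|$. Recall that for a single prescribed signature $a\in\mathbb{Z}_{\ge 0}^G$ with $\sum_g a_g=r$, one weighted matroid intersection of $M$ with the partition matroid whose classes are $\{E(g)\}_{g\in G}$ and whose capacities are $a_g$ returns, if it exists, a minimum-weight set that is a base of $M$ with $|B\cap E(g)|=a_g$ for all $g$; for $\operatorname{GCMB}(m)$ we simply take $w\equiv 0$, so that any feasibility witness suffices. Such a base is a $g$-base exactly when $\sum_{h\in G} a_h\cdot h = g$, so it is enough to search over candidate signatures having this property, and by the algorithm of \cite{blikstad:LIPIcs.ICALP.2021.31} each call costs $O(nr^{5/6}\log r)$.

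First I would compute, by the greedy algorithm, an optimum base $B^*$ of $M$ with respect to $w$ (an arbitrary base in the $\operatorname{GCMB}(m)$ case) together with its signature $a^0$; this takes $O(n\log n)$ and is negligible. Then I would enumerate all candidate signatures of the form $a = a^0 - \rho + \alpha$ where $\rho,\alpha\in\mathbb{Z}_{\ge 0}^G$ satisfy $\rho\le a^0$ coordinatewise and $\sum_g\rho_g,\ \sum_g\alpha_g\le k$, discarding any $a$ with a coordinate exceeding $|E(g)|$ or with $\sum_h a_h\cdot h\neq g$. For each surviving $a$ I run the matroid-intersection subroutine; among all bases returned I output one of minimum weight (for $\operatorname{GCMB}(m)$, the first one found), and I report infeasibility if none is returned.

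For correctness, suppose a $g$-base exists. In the $\operatorname{GCMB}(m)$ case $B^*$ is an arbitrary base, so by $k$-closeness there is a $g$-base $B'$ with $|B^*\setminus B'|\le k$; its signature is obtained from $a^0$ by deleting at most $k$ labels present in $B^*$ and inserting at most $k$ labels, hence equals $a^0-\rho+\alpha$ for a pair $(\rho,\alpha)$ in the enumeration, so the subroutine returns a base of that signature, which is necessarily a $g$-base. In the $\operatorname{GCOMB}(m)$ case, since $B^*$ is an optimum base, strong $k$-closeness yields an optimum $g$-base $B'$ with $|B^*\setminus B'|\le k$; its signature $a'$ is enumerated, and the minimum-weight base the subroutine returns for $a'$ has weight at most $w(B')$ and label sum $g$, hence is itself an optimum $g$-base. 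Conversely, every base ever output has label sum $g$ by the filtering step, so the weight reported equals the optimum $g$-base weight, and infeasibility is reported only when the above argument guarantees no $g$-base exists. Finally, the number of pairs $(\rho,\alpha)$ is at most the square of the number of vectors in $\mathbb{Z}_{\ge 0}^G$ of $\ell_1$-norm at most $k$, which is $O\!\big(\binom{k+|G|-1}{k}^2\big)$; multiplying by the per-call cost $O(nr^{5/6}\log r)$ gives the claimed $\tilde O\!\big(\binom{k+|G|-1}{k}^2\, nr^{5/6}\big)$ running time.

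The main difficulty is not a single hard step but lining up the reduction: one must use that strong $k$-closeness is a statement about optimum bases and optimum $g$-bases (not arbitrary bases), so that pinning the signature to $a'$ and minimizing weight within that signature class recovers an optimum $g$-base rather than merely some $g$-base; and one must check that restricting the partition-matroid intersection to common independent sets of size $r$ exactly enforces the target signature. The combinatorial estimate on the number of candidate signatures, and the fact that greedy produces $B^*$ within the stated budget, are routine.
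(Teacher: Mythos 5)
Your proposal is correct and follows essentially the same route as the paper: find an (optimum) base, use (strong) $k$-closeness to bound how the target signature can differ from its signature (your $(\rho,\alpha)$ are the paper's $a^-,a^+$), and run the weighted matroid-intersection subroutine for each of the at most $\binom{k+|G|-1}{k}^2$ candidate signatures. Your write-up is somewhat more explicit than the paper's (e.g.\ on why minimizing weight within the target signature class recovers an optimum $g$-base), but there is no substantive difference in the argument.
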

\begin{proof}
The algorithm consists of two parts. First find any base $B$. By (strong) $k$-closeness, we know there is a feasible (optimum) $g$-base $D$ that differs from $B$ in at most $k$ positions. Let $a_D$ and $a_B$ be the signature vectors of $D$ and $B$, respectively. 
% Note $\|a_D-a_B\|_1 \leq 2k$. Hence, we can consider matroid intersections to search for the (optimum) $g$-bases whose signature is at most $2k$ from $a_B$ in $L_1$-norm distance. The number of such vectors is at most $|G|^{2k}$.
Let $a^+:=(a_D-a_B)\vee 0$, $a^-:=(a_D-a_B)\wedge 0$ be vectors taking coordinate-wise maximum and minimum of $(a_D-a_B)$ with $0$, respectively. Note that $a^+\geq 0$ and $\sum_{g}a^+_g\leq k$. Therefore, there are at most $\binom{k+|G|-1}{k}$ possible $a^+$'s, and similarly at most $\binom{k+|G|-1}{k}$ possible $a^-$'s.
Hence, we only need to run the matroid intersection algorithm for at most $\binom{k+|G|-1}{k}^2$ times to search for the (optimum) $g$-bases which are (strongly) $k$-close to $B$. 
\qed
\end{proof}

% Note one could improve the estimate to obtain better asymptotic. 
% Specifically, the actually number of possibility is the Apéry numbers https://oeis.org/A005258 , which is basically bounded by 12^n instead of what we have here, which is 16^n.

If we take $k=|G|-1$ and use the fact that $\binom{2k}{k}\leq 2^{2k}$, we obtain the following corollary.

\begin{corollary}\label{thm:FPT2}
If $G$ is $(|G|-1)$-close or strongly $(|G|-1)$-close, then there is an algorithm whose running time in $\tilde{O}(2^{4|G|}nr^{5/6})$ for $\operatorname{GCMB}(m)$ or $\operatorname{GCOMB}(m)$, respectively.
\end{corollary}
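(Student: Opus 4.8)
The plan is to derive this directly from \Cref{thm:FPT} by specializing the parameter $k$ and simplifying the resulting binomial factor. First I would instantiate \Cref{thm:FPT} with $k := |G|-1$: since $G$ is by hypothesis $(|G|-1)$-close (respectively strongly $(|G|-1)$-close), the theorem yields an algorithm for $\operatorname{GCMB}(m)$ (respectively $\operatorname{GCOMB}(m)$) running in time $\tilde{O}\left(\binom{(|G|-1)+|G|-1}{|G|-1}^2 n r^{5/6}\right) = \tilde{O}\left(\binom{2|G|-2}{|G|-1}^2 n r^{5/6}\right)$.

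Next I would bound the central binomial coefficient. The standard estimate $\binom{2k}{k} \leq 2^{2k}$ follows from the binomial theorem, since $2^{2k} = (1+1)^{2k} = \sum_{j=0}^{2k}\binom{2k}{j} \geq \binom{2k}{k}$. Applying this with $k = |G|-1$ gives $\binom{2|G|-2}{|G|-1} \leq 2^{2|G|-2}$, hence $\binom{2|G|-2}{|G|-1}^2 \leq 2^{4|G|-4} \leq 2^{4|G|}$. Substituting this bound into the running time from the previous paragraph produces the claimed $\tilde{O}(2^{4|G|}\,n\,r^{5/6})$ bound, for $\operatorname{GCMB}(m)$ and $\operatorname{GCOMB}(m)$ respectively.

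Since the statement is an immediate consequence of \Cref{thm:FPT}, there is no real obstacle here; the only points requiring any care are to confirm that the hypothesis of the corollary is precisely the hypothesis of \Cref{thm:FPT} specialized to $k = |G|-1$, and that the $\tilde{O}(\cdot)$ notation uniformly absorbs the polylogarithmic overhead of the matroid intersection subroutine. The latter holds because $|G|$ enters the bound only through the number $\binom{2|G|-2}{|G|-1}^2$ of matroid intersection calls, each of which costs $O(n\,r^{5/6}\log r)$.
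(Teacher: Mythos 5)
Your proposal is correct and matches the paper's own argument exactly: instantiate \Cref{thm:FPT} with $k=|G|-1$ and bound the resulting factor via $\binom{2k}{k}\leq 2^{2k}$, giving $\binom{2|G|-2}{|G|-1}^2\leq 2^{4|G|-4}\leq 2^{4|G|}$. Nothing further is needed.
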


\section{Properties of minimum counterexamples to \Cref{conj:feasibility} and \Cref{conj:optimization}}\label{sec:minexample}

In this section, we show that to prove \Cref{conj:feasibility} or \Cref{conj:optimization}, it suffices to prove it for a restricted type of matroids. Assume for the sake of contradiction that $G$ is not (strongly) $k$-close, then there exists a counterexample of minimum size. This section shows that the minimum counterexamples are block matroids of size $(k+1)$. Hence, it suffices to prove the conjectures for those matroids. We need a lemma by Brualdi \cite{brualdi_1969}. 
\begin{lemma}[\cite{brualdi_1969}]\label{lem:braldi}
Let $A$ and $B$ be bases in a matroid, then there exists a bijection $f:A\setminus B\to B\setminus A$, such that $A-a+f(a)$ is a base.
\end{lemma}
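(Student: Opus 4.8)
The plan is to prove this by induction on $k := |A \setminus B|$, the symmetric difference parameter, using the ordinary matroid base exchange property as the engine. The base case $k = 0$ is trivial since then $A = B$ and $f$ is the empty (or identity) map. For the inductive step, the key is to peel off one carefully chosen exchange pair and recurse inside a smaller matroid.

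First I would invoke the symmetric exchange property in the following strong form: for bases $A, B$ and any element $a \in A \setminus B$, there exists $b \in B \setminus A$ such that both $A - a + b$ and $B - b + a$ are bases. (This is the standard symmetric base exchange lemma; it follows from considering the fundamental circuit of $b$ with respect to $A$ and the fundamental cocircuit of $a$.) Fix such a pair $(a, b)$ and set $A' := A - a + b$. Then $|A' \setminus B| = k - 1$, so by the inductive hypothesis there is a bijection $f' : A' \setminus B \to B \setminus A'$ with $A' - x + f'(x)$ a base for every $x \in A' \setminus B$. Now extend $f'$ to a bijection $f : A \setminus B \to B \setminus A$ by declaring $f(a) := b$ and $f(x) := f'(x)$ for $x \in (A \setminus B) \setminus \{a\} = (A' \setminus B) \setminus \{b\}$. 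One checks the domains and codomains match: $A \setminus B = (A' \setminus B \setminus \{b\}) \cup \{a\}$ and $B \setminus A = (B \setminus A') \setminus \{a\} \cup \{b\}$... wait, more carefully, $B \setminus A = (B \setminus A') \cup \{b\} \setminus \{a\}$ since $A$ and $A'$ differ exactly by swapping $a$ for $b$; I would verify this bookkeeping explicitly. For $x = a$ the required base $A - a + b = A'$ is a base by choice of $(a,b)$. For $x \neq a$, I need $A - x + f'(x)$ to be a base, but the inductive hypothesis only gives me that $A' - x + f'(x)$ is a base — and $A'$ is not $A$.

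The main obstacle is exactly this last gap: transferring "$A' - x + f'(x)$ is a base" to "$A - x + f'(x)$ is a base." The two sets differ by the pair $(a, b)$: we have $A - x + f'(x) = (A' - x + f'(x)) - b + a$. So I need the single swap replacing $b$ by $a$ inside the base $A' - x + f'(x)$ to again yield a base. This does not follow formally from the recursion as stated, so the cleaner route is to strengthen the inductive statement, or better, to avoid induction on $A'$ and instead argue more globally. I would restructure as follows: prove the stronger claim that there is a bijection $f : A \setminus B \to B \setminus A$ such that for \emph{every} subset $X \subseteq A \setminus B$, the set $A - X + f(X)$ is a base — no wait, that is false in general (it holds only for strongly base orderable matroids). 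So I must be content with the single-element version. The correct fix: instead of inducting on the modified base $A'$, I would prove the bijection exists by a direct argument on the bipartite "exchange graph" $H$ on vertex classes $A \setminus B$ and $B \setminus A$, with an edge $ab$ whenever $A - a + b$ is a base. The goal becomes showing $H$ has a perfect matching, which by Hall's theorem reduces to: for every $S \subseteq A \setminus B$, the neighborhood $N(S)$ has $|N(S)| \geq |S|$. To prove the Hall condition, take $S \subseteq A \setminus B$ and consider the base $B$; apply the multiple-exchange (Greene--Woodall) symmetric exchange property to the pair $A, B$ restricted to moving $S$ out, producing a subset $T \subseteq B \setminus A$ with $|T| = |S|$ and $A - S + T$ a base; then a shifting/uncrossing argument shows every element of $T$ is reachable, giving $T \subseteq N(S)$. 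The delicate point, and where I expect to spend the most effort, is establishing that such a $T$ can be chosen with $T \subseteq N(S)$ elementwise rather than just $A - S + T$ being a base collectively — this is precisely the content that makes Brualdi's lemma non-trivial, and it is cleanest to cite the symmetric exchange theorem for matroids (Brualdi's own 1969 argument, or Greene's) at this point rather than re-derive it.
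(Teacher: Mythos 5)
The paper itself offers no proof of this lemma --- it is quoted directly from Brualdi's 1969 paper --- so your proposal can only be judged on its own terms. Your first attempt (induction on $|A\setminus B|$ after a single symmetric exchange) fails for exactly the reason you identify, and your decision to abandon it is right. Your replacement skeleton --- build the bipartite exchange graph on $A\setminus B$ versus $B\setminus A$ with an edge $ab$ whenever $A-a+b$ is a base, and show a perfect matching exists via Hall's condition --- is the standard and correct route. The genuine gap is that you stop at precisely the step that carries all the content: verifying $|N(S)|\ge|S|$. You propose to do this by invoking the Greene--Woodall multiple symmetric exchange theorem plus an unspecified ``shifting/uncrossing argument,'' and when you cannot see how to get $T\subseteq N(S)$ elementwise you suggest citing ``Brualdi's own 1969 argument.'' That last move is circular --- Brualdi's 1969 argument \emph{is} the lemma you are asked to prove --- and the multi-exchange theorem is both a heavier tool than needed and, as invoked, does not by itself close the step you flag.

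The fix is elementary and needs no exchange theorem at all. Recall $A-a+b$ is a base iff $a$ lies in the fundamental circuit $C(b,A)$ of $b$ with respect to $A$, so $N(S)=\{b\in B\setminus A:\ C(b,A)\cap S\neq\emptyset\}$. Given $S\subseteq A\setminus B$, the set $A\setminus S$ is independent and $B$ spans $M$, so extend $A\setminus S$ to a base of $M$ using elements of $B$; since $A\cap B\subseteq A\setminus S$, the added set $T$ satisfies $T\subseteq B\setminus A$ and $|T|=|S|$. For each $t\in T$, the set $(A\setminus S)+t$ is independent, so $t\notin\mathrm{span}(A\setminus S)$; hence $C(t,A)$, which lies in $A+t$, must meet $S$, i.e.\ $t\in N(S)$. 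Thus $|N(S)|\ge|T|=|S|$, Hall's condition holds, and since $|A\setminus B|=|B\setminus A|$ the resulting matching is the desired bijection. (This also answers the worry in your proposal: whenever $A-S+T$ is even independent, every $t\in T$ automatically has $C(t,A)\cap S\neq\emptyset$, so the ``elementwise'' conclusion is free.) With that paragraph inserted, your Hall-based argument is a complete and self-contained proof.
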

We give the following lemma.
\begin{lemma}\label{lemma:optimum-dist1}
    Given a matroid $M=(E,\mathcal{I})$ and any weight function $w:E\rightarrow \R$, let $A$ be an optimum base such that its weight is minimized. Then, for any $a\in A$, there exists some $b\in E\setminus A$ such that $A-a+b$ is an optimum base in $M'=M\setminus a$ with weight being the restriction of $w$ to $E\setminus a$.
\end{lemma}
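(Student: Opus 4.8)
The plan is to prove this by perturbing the weight so that $A$ becomes a minimum-weight base that is moreover only a single exchange away from a minimum-weight base of $M'=M\setminus a$, and then to finish with one application of \Cref{lem:braldi}. I would first dispose of a degenerate case: if $a$ is a coloop of $M$ then $M\setminus a$ has rank $r(M)-1$ and no set of the form $A-a+b$ with $b\in E\setminus A$ is a base of $M\setminus a$, so we may assume $a$ is not a coloop. Then $r(M')=r(M)$ and the bases of $M'$ are exactly the bases of $M$ that avoid $a$. Writing $\mu:=\min\{w(B):B\text{ a base of }M,\ a\notin B\}$, the goal becomes: find $b\in E\setminus A$ such that $A-a+b$ is a base of $M$ avoiding $a$ with $w(A-a+b)=\mu$. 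Since $A$ is $w$-minimum over all bases, $w(A)\le\mu$.

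The crucial step is choosing the right perturbation. Let $w'$ agree with $w$ off $a$ and set $w'(a):=w(a)+(\mu-w(A))$, a nonnegative shift. Then $A$ is a minimum-weight base of $M$ for $w'$: we have $w'(A)=\mu$; any base containing $a$ has $w'$-weight $w(B)+(\mu-w(A))\ge\mu$ by $w$-optimality of $A$; and any base avoiding $a$ has $w'$-weight $w(B)\ge\mu$ by definition of $\mu$. On the other hand, if $B^*$ is any minimum-weight base of $M'$ then $a\notin B^*$ forces $w'(B^*)=w(B^*)=\mu=w'(A)$, so $B^*$ is simultaneously a $w'$-minimum base of $M$, and $a\in A\setminus B^*$.

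Now I exchange $a$ out of $A$ toward $B^*$. \Cref{lem:braldi} applied to $A$ and $B^*$ gives a bijection $f:A\setminus B^*\to B^*\setminus A$ with $A-z+f(z)$ a base of $M$ for every $z$. By $w'$-optimality of $A$, $w'(f(z))\ge w'(z)$ for each such $z$; summing over $z\in A\setminus B^*$ and comparing with $w'(A\setminus B^*)=w'(B^*\setminus A)$ (which follows from $w'(A)=w'(B^*)$) forces every one of these inequalities to be an equality, in particular $w'(f(a))=w'(a)$. Take $b:=f(a)$: then $A-a+b$ is a base of $M$; it avoids $a$ because $b\in B^*\setminus A$ and $a\notin B^*$ give $b\ne a$; and since $w$ and $w'$ coincide on sets avoiding $a$, $w(A-a+b)=w'(A)-w'(a)+w'(b)=w'(A)=\mu$. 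Hence $A-a+b$ is an optimum base of $M'$, as required.

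The only genuinely non-routine point is recognizing that a detour through a modified weight is needed at all. A direct attempt to exchange $a$ out of $A$ using only $w$-optimality of $A$ breaks down precisely when $a$ belongs to every minimum-weight base of $M$: there a minimum-weight base $B^*$ of $M'$ is strictly heavier than $A$, so $A$ and $B^*$ share no common scale of optimality on which to run an exchange argument. Shifting $w(a)$ up by exactly $\mu-w(A)$ is the amount that puts $A$ and the optimal $a$-avoiding bases on equal footing; after that the argument is pure matroid exchange bookkeeping. (An alternative route is to invoke that the minimum-weight bases of a matroid themselves form a matroid, but the perturbation argument above uses nothing beyond \Cref{lem:braldi}.)
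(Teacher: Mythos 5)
Your proof is correct, and at its core it rests on the same ingredient as the paper's: apply \Cref{lem:braldi} to $A$ and an optimum base of $M\setminus a$, and use the optimality of $A$ to conclude that the single exchange at $a$ already yields an optimum base of $M\setminus a$. The difference is packaging. The paper runs the argument directly with the original weight $w$: with $A'$ optimum in $M\setminus a$, Brualdi's bijection $f:A\setminus A'\to A'\setminus A$ gives $w(e)\le w(f(e))$ for every $e$, and then
$w(A-a+f(a)) = w(A)-\bigl(w(a)-w(f(a))\bigr)\le w(A)-\sum_{e\in A\setminus A'}\bigl(w(e)-w(f(e))\bigr)=w(A')$,
since discarding the remaining nonpositive terms only helps. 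So your closing claim that a direct exchange argument ``breaks down'' when $a$ lies in every minimum-weight base of $M$ is not accurate: one never needs $w(A-a+b)\le w(A)$, only $w(A-a+b)\le w(A')$, and the other exchange inequalities absorb the gap $w(A')-w(A)$ --- the very quantity $\mu-w(A)$ that your perturbation shifts onto $w(a)$. Your detour through $w'$ is therefore equivalent bookkeeping, just less direct; what it buys is the slightly sharper observation that under $w'$ every exchange along $f$ is weight-neutral, which the statement does not require. One genuine merit of your write-up is flagging the coloop case: if $a$ is a coloop the statement has no witness $b$ at all, and the paper's proof tacitly assumes $a\notin A'$, i.e.\ that $A'$ is also a base of $M$; in the paper's application (deleting an element of one block of a block matroid) this is automatic, but it is worth making explicit as you did.
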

\begin{proof}
    Suppose $A'$ is an optimum base in $M\setminus a$. By \Cref{lem:braldi}, there exists a bijection $f:A\setminus A'\rightarrow A'\setminus A$ such that $A-e+f(e)$ is a base for any $e\in A\setminus A'$. Since $A$ is an optimum base, we have $w(A)\leq w(A-e+f(e))$, and thus $w(e)\leq w(f(e))$ for each $e\in A\setminus A'$. For any $a\in A$, let $b=f(a)$. Then, $w(A-a+b)=w(A)-w(a)+w(b)\leq w(A)-\sum_{e\in A\setminus A'}(w(e)-w(f(e)))=w(A')$. The inequality is because $a\in A\setminus A'$ and $w(e)\leq w(f(e))$ for any $e\in A\setminus A'\setminus a$. Since $A'$ is an optimum base in $M\setminus a$, $A-a+b$ is also an optimum base in $M\setminus a$.
\qed
\end{proof}

Next, we show that if $G$ is not strongly $k$-close, then there has to be a $G$-labeling $\ell$, a weight function $w$, an element $g\in G$, and a block matroid with blocks $A,B$, such that $A$ is an optimum base, $B$ is the closest optimum $g$-base to $A$, and $d(A,B)>k$. We call the pair of bases $A,B$ a \emph{witness}.

\begin{theorem}\label{thm:mim_counterexample_weight}
If a finite abelian group $G$ is not strongly $k$-close, then the there is a block matroid $M$ of rank $k+1$ with blocks $A$ and $B$ that form a witness.
\end{theorem}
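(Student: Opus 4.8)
The plan is to take a minimum-size counterexample and repeatedly apply reductions until it is forced into the desired shape. Suppose $G$ is not strongly $k$-close, and among all $(M,\ell,w,g)$ witnessing this — i.e. $M$ has an optimum base $A$ all of whose optimum $g$-bases are at distance $>k$ — pick one minimizing $|E(M)|$, and among those, minimizing $r(M)$. I would first argue that a $g$-base exists at all (otherwise there is nothing to violate), so $g \in \ell(M)$; let $B$ be a closest optimum $g$-base to $A$, and set $d := d(A,B) = |A\setminus B| > k$.

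The first reduction is to delete any element outside $A \cup B$: such an element lies in no inclusion-minimal exchange between $A$ and $B$, and deleting it leaves $A$ a base, keeps $B$ an optimum $g$-base at the same distance, and cannot create a closer optimum $g$-base (any $g$-base of $M\setminus e$ is a $g$-base of $M$). This contradicts minimality of $|E(M)|$ unless $E(M) = A \cup B$. The second reduction handles elements in $A \cap B$: if $a \in A\cap B$, consider $M\setminus a$ — wait, $a$ would be in every base. Instead contract: in $M/a$ with the induced weight and the shifted target $g - \ell(a)$, the base $A - a$ is optimum (standard), and $B - a$ is an optimum $(g-\ell(a))$-base. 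Any optimum $(g-\ell(a))$-base $C$ of $M/a$ gives the optimum $g$-base $C + a$ of $M$ with $d(A, C+a) = d(A-a, C)$, so closeness is preserved in both directions; this contradicts $r(M)$-minimality. Hence $A \cap B = \emptyset$, so $E(M) = A \sqcup B$ is the disjoint union of two bases, i.e. $M$ is a block matroid with blocks $A, B$, and $r(M) = |A| = |A\setminus B| = d \geq k+1$.

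It remains to bring the rank down to exactly $k+1$. Here I would use \Cref{lemma:optimum-dist1}: if $r(M) = d > k+1$, pick any $a \in A$; the lemma gives $b \in E\setminus A = B$ with $A' := A - a + b$ an optimum base of $M\setminus a$. Now in $M\setminus a$ with the restricted weight and the same target $g$, I claim $A'$ still has no close optimum $g$-base — any optimum $g$-base of $M\setminus a$ is an optimum $g$-base of $M$, hence at distance $> k$ from $A$, and since $d(A,A') = 1$ this forces distance $\geq k$ from $A'$; a slightly more careful argument (using that $A'$ and $A$ agree off $\{a,b\}$, and that $B$ itself remains available in $M\setminus a$ only if $a \notin B$, which holds since $A\cap B = \emptyset$) shows the violation survives, contradicting $|E(M)|$-minimality. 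So $r(M) = k+1$, and we are done.

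The main obstacle I anticipate is the last step: after deleting $a$ and re-routing $A$ to $A'$, one must be sure the distance lower bound is not merely $k$ but strictly $>k$, i.e. that $A'$ genuinely violates strong $k$-closeness rather than sitting exactly at the boundary. The clean fix is to order the reductions so that the rank-reduction step is phrased as "$M\setminus a$ is a smaller counterexample for the \emph{same} $k$" — for which it suffices that the closest optimum $g$-base to $A'$ in $M\setminus a$ has distance $>k$ — and to verify this by transporting $g$-bases back to $M$ and using the triangle-type inequality $d(A,C) \le d(A,A') + d(A',C) = 1 + d(A',C)$ together with $d(A,C) > k$, which only yields $d(A',C) > k-1$; to get the strict bound one instead argues directly that any $C$ with $d(A',C)\le k$ would, by a single exchange back through $a$, produce an optimum $g$-base within distance $k$ of $A$. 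Making that single-exchange step fully rigorous (it is essentially another application of Brualdi's lemma, \Cref{lem:braldi}, inside the block matroid) is the technical heart of the argument; everything else is bookkeeping.
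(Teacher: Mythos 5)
Your first two reductions (delete everything outside $A\cup B$, contract $A\cap B$) are correct and essentially identical to the paper's; the minor quibble that contraction already contradicts $|E|$-minimality, so your secondary minimization over $r(M)$ is unnecessary, does not affect correctness. The genuine gap is exactly where you flag it: the rank-reduction step. The triangle inequality only yields $d(A',C)\geq k$ for an optimum $g$-base $C$ of $M\setminus a$, and your proposed repair --- a ``single exchange back through $a$'' via \Cref{lem:braldi} turning $C$ into an optimum $g$-base within distance $k$ of $A$ --- does not work as stated, because an arbitrary single exchange preserves neither the label sum $g$ nor optimality of the weight; you leave this ``technical heart'' unproved, so the theorem is not established.

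The missing observation, which is how the paper closes the argument, is that the block structure you have already secured makes the situation rigid: since $E=A\,\dot\cup\,B$ and $B$ is the \emph{closest} optimum $g$-base to $A$ with $d(A,B)=|A\setminus B|=|A|=r(M)$, every optimum $g$-base $D$ of $M$ satisfies $|A\setminus D|\geq |A|$, hence $D\cap A=\emptyset$, hence $D=B$; so $B$ is the unique optimum $g$-base. Now suppose $r(M)>k+1$, pick $a\in A$, and let $A'=A-a+b$ be the optimum base of $M\setminus a$ given by \Cref{lemma:optimum-dist1}. Any optimum $g$-base of $M\setminus a$ is an optimum $g$-base of $M$ avoiding $a$ (the minimum weights coincide because $B$ survives the deletion), hence equals $B$, and $d(A',B)=|A'\setminus B|=|A|-1=r(M)-1>k$. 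Thus $(A',B)$ is a witness in the strictly smaller matroid $M\setminus a$, contradicting minimality --- no triangle inequality, boundary case, or extra exchange lemma is needed. With this uniqueness/disjointness argument inserted, your proof goes through and coincides with the paper's.
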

\begin{proof}
Let matroid $M=(E,\mathcal{I})$ be the smallest matroid, in terms of the number of elements, such that $G$ is not strongly $k$-close for $M$. Let $w:E\rightarrow \Z$ be a weight function and $g\in G$. Recall that $\mathcal{B}^* = \arg\min_{B\in \mathcal{B}} w(B)$ and $\mathcal{B}^*(g) = \arg\min_{B\in \mathcal{B}(g)} w(B)$ are the sets of optimum bases and optimum $g$-bases, respectively. There exists $A\in\mathcal{B}^*$ and $B\in\mathcal{B}^*(g)$ such that $B$ is an optimum $g$-base closest to $A$ but $|A\setminus B|>k$.

First, we argue that $A\cap B=\emptyset$. If not, let $M'=(E',\mathcal{I}')$ be the matroid obtained by contracting $A\cap B$, and let $A'=A\setminus B$, $B'=B\setminus A$, $g'=g-\ell(A\cap B)$ and $w'=w|_{E'}$. Clearly, $A'$ is a weighted minimum base of $M'$ and $B'$ is a weighted minimum $g'$-base of $M'$ with weight $w'$. Also, $d(A',B')=|A'\setminus B'|=|A\setminus B|>k$, and $B'$ is an optimum $g'$-base closest to $A'$. However, $|E'|=|E|-|A\cap B|<|E|$, contradicting to the minimality of $|E|$. Moreover, $A\cup B=E$. Otherwise, let $M'$ be the matroid obtained by deleting $E\setminus (A\cup B)$ and $A,B$ stays a witness in $M'$, contradicting to the minimality of $|E|$. Therefore, $M$ is a block matroid which is a block matroid with blocks $A$ and $B$.

Suppose $r(M)>k+1$. Pick any $a\in A$. Let $M'=(E',\mathcal{I}'):=M\setminus a$. By \Cref{lemma:optimum-dist1}, there exists $b\in B$ such that $A':=A-a+b$ is a weighted minimum base in $M'$. Clearly, $B$ is still a weighted minimum $g$-base in $M'$ that is closest to $A'$. Moreover, $d(A',B)=|A'\setminus B|=|A|-1=r(M)-1>k$. Therefore, $G$ is not strongly $k$-close for $M'$, since $A',B$ is a witness. Because $|E'|<|E|$, we obtain a contradiction. Therefore, $r(M)\leq k+1$. Combining this with the fact that $r(M)=|A\setminus B|>k$, we get $r(M)=k+1$.
\qed
\end{proof}

As a corollary, if $G$ is not $k$-close, then there has to be a $G$-labeling $\ell$, an element $g\in G$ and a block matroid $M$ with blocks $A,B$, such that $A$ is a base, $B$ is the closest $g$-base to $A$, and $d(A,B)>k$. Similarly, we call the pair of bases $A,B$ a \emph{witness} in the unweighted setting.

\begin{corollary}\label{cor:min_counterexample_feasibility}
If a finite abelian group $G$ is not $k$-close, then the there is a block matroid $M$ of rank $k+1$ with blocks $A$ and $B$ that form a witness.
\end{corollary}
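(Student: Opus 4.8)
The plan is to rerun the minimum-counterexample argument of \Cref{thm:mim_counterexample_weight} in the unweighted setting, which amounts to taking the constant weight $w\equiv 0$ (so that every base is optimum and every $g$-base is optimum) and replacing the use of \Cref{lemma:optimum-dist1} by Brualdi's lemma (\Cref{lem:braldi}). Concretely, let $M=(E,\mathcal{I})$ be a matroid with the fewest elements for which $G$ is not $k$-close, witnessed by a labeling $\ell$, a target $g\in G$, a base $A$, and a $g$-base $B$ closest to $A$ with $d(A,B)=|A\setminus B|>k$.

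First I would show $A\cap B=\emptyset$. If not, contract $C:=A\cap B$: in $M/C$ with the induced labeling and target $g':=g-\ell(C)$, the base $A\setminus C$ and the $g'$-base $B\setminus C$ satisfy $d_{M/C}(A\setminus C,B\setminus C)=d_M(A,B)>k$, and $B\setminus C$ is still a closest $g'$-base, since the $g'$-bases of $M/C$ are exactly the $g$-bases of $M$ containing $C$, among which $B$ was already closest to $A$; this contradicts minimality. With $A\cap B=\emptyset$ we get $d(A,B)=|A|=r(M)$, the largest possible distance between two bases, so --- using that $B$ is a \emph{closest} $g$-base --- every $g$-base of $M$ is disjoint from $A$. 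Hence if $A\cup B\neq E$, deleting $E\setminus(A\cup B)$ keeps $A$ a base and leaves $B$ as the unique $g$-base, still at distance $|A|>k$, again contradicting minimality. So $M$ is a block matroid with blocks $A$ and $B$, and $B$ is its only $g$-base.

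For the rank bound, suppose $r(M)>k+1$, fix $a\in A$, and put $M':=M\setminus a$. By \Cref{lem:braldi} applied to the disjoint bases $A$ and $B$ there is $b\in B$ with $A':=A-a+b$ a base of $M'$. Since every $g$-base of $M$ avoids $a$, the $g$-bases of $M'$ coincide with those of $M$, namely just $B$, and $d(A',B)=|A'\setminus B|=|A|-1=r(M)-1>k$, so $G$ is not $k$-close for the smaller matroid $M'$ --- a contradiction. Therefore $r(M)\le k+1$, and since $r(M)=|A\setminus B|>k$ we conclude $r(M)=k+1$. The only place requiring a moment's care is checking that each of the three reductions again produces a genuine counterexample to $k$-closeness, i.e.\ that the new $B$ is still a \emph{closest} $g$-base for the new base; the observation that in a minimal counterexample every $g$-base is disjoint from $A$ --- equivalently, $B$ is the unique $g$-base of the block matroid --- makes this bookkeeping immediate, and this is precisely the role played by \Cref{lemma:optimum-dist1} in the weighted proof.
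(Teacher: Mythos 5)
Your proof is correct and is essentially the paper's own argument: the paper proves this corollary by setting $w\equiv 0$ in the proof of \Cref{thm:mim_counterexample_weight}, which is exactly the rerun you carry out, with \Cref{lemma:optimum-dist1} replaced by its unweighted specialization, Brualdi's lemma (\Cref{lem:braldi}). Your added observation that in a minimal counterexample $B$ is the unique $g$-base is a harmless streamlining of the same bookkeeping.
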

\begin{proof}
The proof follows by setting the weight function to be $0$ in the proof of \Cref{thm:mim_counterexample_weight}.
\qed
\end{proof}

Therefore, in order to show $G$ is $k$-close or strongly $k$-close, we just have to show it is $k$-close or strongly $k$-close for all block matroids of rank $k+1$, respectively.

%===========================================================
\section{$k$-closeness and isolation}\label{sec:kclose}

We discuss progress towards conjecture \Cref{conj:feasibility} in this section. We will prove in \Cref{thm:Schrijver_Seymour_pq} that whenever a certain additive combinatorics conjecture is true, any finite abelian group $G$ is $(|G|-1)$-close.

First, we set up some notions we use later.
We say a base $B$ is \emph{isolated} under label $\ell$ if it is the unique base with the label $\ell(B)$. A labeling is called \emph{block isolating} if it isolates a base whose complement is also a base, i.e., it isolates a block. The next proposition shows that isolation and $k$-closeness are related concepts.

\begin{proposition}\label{prop:blockisolating}
If no $G$-labeling of a rank $k+1$ block matroid $M$ is block isolating, then G is $k$-close for $M$.
\end{proposition}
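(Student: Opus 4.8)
The plan is to prove the contrapositive: assuming $G$ is not $k$-close for some rank-$(k+1)$ block matroid, I want to produce a $G$-labeling of a rank-$(k+1)$ block matroid that is block isolating. By \Cref{cor:min_counterexample_feasibility}, the failure of $k$-closeness for $M$ already gives us a concrete structure: a rank-$(k+1)$ block matroid with blocks $A$ and $B$ forming a witness, so that $A$ is a base, $B$ is the $g$-base closest to $A$, and every $g$-base is at distance $>k$ from $A$, where $g = \ell(B)$. Since the rank is $k+1$, distance $>k$ from $A$ forces distance exactly $k+1$, i.e.\ any $g$-base is disjoint from $A$. In a block matroid of rank $k+1$ on $2(k+1)$ elements, the only base disjoint from $A$ is $B$ itself (the complement of $A$). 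Hence $B$ is the \emph{unique} $g$-base, i.e.\ the unique base with label $g = \ell(B)$. That is exactly the statement that $B$ is isolated under $\ell$, and since its complement $A$ is a base, $\ell$ is block isolating for $M$.

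So the key steps, in order, are: (i) Assume $G$ is not $k$-close for $M$; invoke \Cref{cor:min_counterexample_feasibility} to pass to a rank-$(k+1)$ block matroid with a witness pair $A,B$ (note the corollary's hypothesis is exactly that $G$ is not $k$-close, so it applies — though I should be slightly careful that the proposition is stated for ``$M$'' specifically, so really I should just say: if some $G$-labeling of some rank-$(k+1)$ block matroid is not $k$-close, produce a block-isolating labeling, which is what the contrapositive of the proposition requires). (ii) Observe that $d(A,B) > k$ together with $r(M) = k+1$ forces $A \cap B = \emptyset$ and $d(A,B) = k+1$. (iii) Use the block-matroid structure: $E = A \sqcup B$, $|A| = |B| = k+1$, so any base at distance $k+1$ from $A$ must be $E \setminus A = B$. (iv) Conclude that no $g$-base other than $B$ exists, hence $B$ is the unique base with label $\ell(B) = g$, so $\ell$ isolates the block $B$, making $\ell$ block isolating.

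The main subtlety — not really an obstacle, but the point requiring care — is the quantifier structure in the statement of \Cref{prop:blockisolating}: it fixes a particular matroid $M$. The cleanest route is to note that the proposition, read contrapositively, says: if $G$ is not $k$-close for $M$ (a rank-$(k+1)$ block matroid), then \emph{some} $G$-labeling of $M$ is block isolating. Then steps (ii)–(iv) above, applied with the witnessing labeling $\ell$ from the failure of $k$-closeness, directly exhibit this block-isolating labeling — in fact the very same $\ell$ works. One should double-check the edge reasoning in (iii): in a loopless block matroid with blocks $A$ and $B$, a base $C$ with $C \cap A = \emptyset$ satisfies $C \subseteq B$ and $|C| = r(M) = |B|$, hence $C = B$; this uses only that all bases have the same cardinality. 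With that, the argument is complete and essentially computation-free.
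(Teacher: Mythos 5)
Your proof is correct and takes essentially the same route as the paper's: from a witness pair $A,B$, distance $>k$ in a rank-$(k+1)$ block matroid forces any $g$-base to be disjoint from $A$ and hence equal to the complementary block $B$, so $B$ is the unique base with label $g$ and $\ell$ is block isolating. Your observation that one can argue directly with the fixed matroid $M$ (rather than citing \Cref{cor:min_counterexample_feasibility}, which strictly speaking yields \emph{some} rank-$(k+1)$ block matroid) is, if anything, a slight tightening of the paper's wording.
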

\begin{proof}
Suppose not. By \Cref{cor:min_counterexample_feasibility}, there exists a $G$-labeling $\ell$, $g\in G$ and witness $A,B$ that are bases of $M$ such that $B$ is the closest $g$-base to $A$. Then, $B$ is the unique base that has labeling $g$. Indeed, if there is any other base $B'\neq B$ with $\ell(B')=g$, then $d(A,B')<k+1=d(A,B)$, since the block structure guarantees that $B$ is the only farthest base from $A$, a contradiction.
\qed
\end{proof}

Equipped with \Cref{prop:blockisolating}, our goal becomes showing that block isolating labelings cannot exist.

\subsection{Congruency-constrained base with prime modulus}
 We will start by proving that \Cref{conj:feasibility} is true for any cyclic group of prime order, or equivalently congruency-constraints modulo primes. This is a special case of results for general groups which will be introduced in the next subsection. But the proof for cyclic groups of prime order is much simpler followed by a counting argument. For the sake of helping readers gain intuition, we present it as well. 
 The main tool we are going to use is the following additive combinatorics lemma by Schrijver and Seymour.
\begin{lemma}[Schrijver-Seymour \cite{schrijver1990spanning}]\label{lemma:schrijvver_Seymour}
    Let $M=(E,\mathcal{I})$ be a matroid with rank function $r$ and let $\ell:E\to \Z_p$ for some prime number $p$. Let $\ell(M):=\{\ell(B)\mid \text{$B$ is a base of $M$}\}$. Then, $|\ell(M)|\geq \min\{p,\ \sum_{g\in\Z_p} r(E(g))-r(M)+1\}$. 
\end{lemma}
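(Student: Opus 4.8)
The plan is to prove the Schrijver--Seymour lemma via a minimal counterexample argument, exploiting the structure already established in this paper: by \Cref{prop:blockisolating} and the preceding reductions, the obstruction to the desired conclusion is a \emph{block isolating} labeling, and the lemma should be read as a quantitative refinement of ``block isolating labelings do not exist for $\Z_p$.'' Concretely, I would first reduce to the case where $M$ is a block matroid: if $|\ell(M)| < p$, then some label value $h$ is \emph{missing}, and one can contract/delete to a minimal submatroid on which this failure persists; as in the proof of \Cref{thm:mim_counterexample_weight}, the extremal such matroid has its ground set covered by two disjoint bases. One should track that contracting an element $e$ shifts the target by $\ell(e)$ and decreases both $r(M)$ and $\sum_g r(E(g))$ in a controlled way, so the quantity $\sum_{g\in\Z_p} r(E(g)) - r(M) + 1$ does not increase under the reductions; hence it suffices to prove the bound for block matroids.

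For the core argument on a block matroid of rank $r$ with blocks, the natural tool is a polynomial-method / character-sum count over $\Z_p$. Let me sketch the counting version. Write $t := \sum_{g\in\Z_p} r(E(g)) - r(M) + 1$; if $t \le 0$ the statement is vacuous, and if $t \ge p$ we must show every residue is attained, so assume $1 \le t \le p-1$. For each base $B$ let $\chi(B) \in \Z_p$ be its label sum. I would count, for a fixed target $h$, the number of bases $B$ with $\chi(B) = h$ using the orthogonality relation $\tfrac1p\sum_{\omega^p=1}\omega^{\chi(B)-h}$, summed over all bases $B$. The main term ($\omega = 1$) gives $(\#\text{bases})/p$, and one must bound the error terms. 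Here is where the hypothesis enters: the ``deficiency'' $\sum_g r(E(g)) - r(M)$ measures how much room there is to move elements between label classes while staying a base, and a careful exchange argument (using Brualdi's \Cref{lem:braldi} repeatedly, or a direct inductive construction building a base label-class by label-class) should show that from any base one can reach bases of at least $t$ distinct label values — so $|\ell(M)| \ge t$, and separately $|\ell(M)| \ge$ (something) forces $= p$ when $t \ge p$. Alternatively, and perhaps more cleanly, one runs a direct induction on $r(M)$: pick an element $e$; either $\ell(M \setminus e)$ or $\ell(M / e)$ already has the required size relative to the smaller parameters, or one combines $\ell(M\setminus e)$ with the coset $\ell(e) + \ell(M\setminus e)$ via the Cauchy--Davenport inequality, which is exactly the $\Z_p$ input that makes the prime case ``much simpler by a counting argument'' as the authors say.

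The key steps, in order: (1) handle the trivial ranges of $t$; (2) reduce to block matroids by the minimal-counterexample contraction/deletion argument, verifying monotonicity of the parameter; (3) set up the induction on rank, with base case rank $1$ (where $\ell(M) = \{\ell(e) : e \text{ a coloop-free element}\}$ and the bound is immediate); (4) in the inductive step, choose a well-placed element $e$ and apply Cauchy--Davenport to $\ell(M\setminus e) \cup (\ell(e)+\ell(M\setminus e))$ or handle the case that $e$ is special (spanning/in every base), reducing to $M/e$; (5) conclude. The main obstacle I anticipate is step (4): it is not automatic that deleting a single element $e$ keeps the matroid full rank \emph{and} that $\sum_g r(E(g))$ drops by exactly $1$ — these can both fail, and one needs to choose $e$ in the right label class (one with $r(E(g)) \ge 1$ strictly ``used'' by some base) and carefully case on whether $r(M\setminus e) = r(M)$ or $r(M\setminus e) = r(M) - 1$. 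Managing this bookkeeping so that the Cauchy--Davenport step applies with the correct parameters — in particular ensuring the two sets being summed are genuinely subsets of $\Z_p$ with sizes adding up past the target — is the delicate part; everything else is routine once the right element is selected.
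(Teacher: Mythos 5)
This statement is not proved in the paper at all: it is quoted as a known theorem of Schrijver and Seymour \cite{schrijver1990spanning} and used as a black box, so the benchmark here is simply whether your sketch would stand on its own as a proof. It does not; it is a plan with the decisive steps missing or incorrect. First, the reduction to block matroids is unjustified: \Cref{prop:blockisolating} and the witness machinery of \Cref{thm:mim_counterexample_weight} concern distances between bases for the $k$-closeness problem (and sit logically \emph{downstream} of this lemma), and they say nothing about the quantity $\sum_{g}r(E(g))-r(M)+1$. You assert that this parameter ``does not increase under the reductions,'' but under contraction $r(M)$ drops by $1$ while $\sum_g r(E(g))$ can drop by much more, so a minimal counterexample need not survive the minor operations, and there is no reason it should be a block matroid. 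Second, the character-sum route cannot work as described: the lemma is a statement about the support of the label distribution over bases, not about equidistribution, and there is no mechanism for bounding the nontrivial character ``error terms'' (in a block matroid with all of one block labeled $0$ and the other labeled $1$ the distribution is as far from uniform as possible, yet the lemma still holds with a tight bound).

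Third, the inductive step you call the ``cleaner'' alternative contains two concrete errors. The coset that a fixed element $e$ contributes is $\ell(e)+\ell(M/e)$, coming from bases \emph{containing} $e$, not $\ell(e)+\ell(M\setminus e)$ as you wrote; deletion only captures the bases avoiding $e$ (and only when $r(M\setminus e)=r(M)$). More importantly, Cauchy--Davenport is a \emph{sumset} inequality, $|X+Y|\ge\min\{p,|X|+|Y|-1\}$, and it gives no lower bound on a \emph{union} such as $\ell(M\setminus e)\cup\bigl(\ell(e)+\ell(M/e)\bigr)$; these two sets can coincide, and ruling that out is exactly where the matroid exchange structure must enter. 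You flag this bookkeeping yourself as ``the delicate part,'' but that part is the entire content of the Schrijver--Seymour argument, so as it stands the proposal has a genuine gap rather than a complete alternative proof.
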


The following lemma states an exchange property for matroids which is most likely routine. We also give its proof here.
\begin{lemma}\label{lemma:connectivity}
    Given a matroid $M=(E,\mathcal{I})$, let $A$ be a base, $A_1\subseteq A$, and $B_1$ be an independent set such that $A\cap B_1=\emptyset$. If $|A_1|+|B_1|-r(A_1\cup B_1)\geq t$, then there exist some $A_2\subseteq A_1$ and $B_2\subseteq B_1$ with $|A_2|=|B_2|=t$, such that $A-A_2+B_2$ is a base.
\end{lemma}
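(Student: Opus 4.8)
The plan is to interpret the quantity $|A_1| + |B_1| - r(A_1 \cup B_1)$ as the nullity (corank) of $B_1$ relative to $A_1$ inside the matroid $M$, and to use the theory of matroid union / the matroid intersection min-max to extract a large partial exchange. Concretely, I would first restrict attention to the minor $N := (M / (A \setminus A_1)) \setminus (E \setminus (A_1 \cup B_1))$, so that $A_1$ is a base of $N$ and $B_1$ is still independent in $N$ with $A_1 \cap B_1 = \emptyset$; moreover $r_N(A_1 \cup B_1) = r_M(A_1 \cup B_1) - |A \setminus A_1|$, and one checks that the deficiency $|A_1| + |B_1| - r_N(A_1 \cup B_1)$ is unchanged (it equals $t$ by hypothesis, or is at least $t$). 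The point of passing to $N$ is that now $A_1$ is a \emph{base}, so every partial exchange $A_1 - A_2 + B_2$ that is independent of size $|A_1|$ is automatically a base of $N$, and lifts back to a base $A - A_2 + B_2$ of $M$.

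Next I would show that in $N$ there exist $A_2 \subseteq A_1$, $B_2 \subseteq B_1$ with $|A_2| = |B_2| = t$ and $A_1 - A_2 + B_2 \in \mathcal{I}(N)$. The clean way is to use the fact that for a base $A_1$ and an independent set $B_1$ disjoint from it, the maximum $s$ for which there is a common "swap" of size $s$ (i.e. $A_2 \subseteq A_1$, $B_2 \subseteq B_1$, $|A_2|=|B_2|=s$, $A_1 - A_2 + B_2$ a base) equals exactly $|B_1| - (r_N(A_1 \cup B_1) - |A_1|) = |A_1| + |B_1| - r_N(A_1 \cup B_1)$. One direction is trivial: any such swap gives $|A_1| - s$ elements of $A_1$ together with $s$ elements of $B_1$ forming an independent set, so $r_N(A_1 \cup B_1) \ge |A_1|$ forces $s \le |B_1| - (r_N(A_1\cup B_1) - |A_1|)$. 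For the other direction — constructing a swap of the full size — I would run a greedy / augmenting argument: as long as the current exchanged base $A'$ still has fewer than the target number of $B_1$-elements, the set $B_1$ is not spanned by $A'$ within $A_1 \cup B_1$ (a dimension count using $r_N(A_1 \cup B_1) = r_N(A') $ and $|A'| = |A_1| < r_N(A_1 \cup B_1) + (\text{number of } B_1\text{-elements short})$ forces some $b \in B_1 \setminus A'$ with $A' + b$ independent), so $A' + b$ contains a circuit meeting $A_1 \cap A'$, and we can drop one such element of $A_1$ to get a new base with one more $B_1$-element; iterate $t$ times.

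Finally, having found $A_2, B_2$ in $N$, I would lift: $A - A_2 + B_2 = (A \setminus A_1) \cup (A_1 - A_2 + B_2)$, and since $A_1 - A_2 + B_2$ is a base of $N = M/(A\setminus A_1)$ after deletions, $(A \setminus A_1) \cup (A_1 - A_2 + B_2)$ is independent of size $|A| = r(M)$ in $M$, hence a base. The main obstacle is the augmenting step: one must verify carefully that the dimension count always produces an admissible $b \in B_1$ to bring in and an admissible $a \in A_1 \cap A'$ to throw out so that the size is maintained and the number of $B_1$-elements strictly increases — this is where the hypothesis $|A_1| + |B_1| - r(A_1 \cup B_1) \ge t$ is consumed, guaranteeing the process does not stall before $t$ swaps are made. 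Everything else (the minor reduction, the lift) is routine bookkeeping with rank functions.
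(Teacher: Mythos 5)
There is a genuine gap, and it sits exactly where the lemma's content lies. Your reduction to the minor $N=(M/(A\setminus A_1))\setminus(E\setminus(A_1\cup B_1))$ rests on two claims that are false in general: that $B_1$ stays independent in $N$, and that $r_N(A_1\cup B_1)=r_M(A_1\cup B_1)-|A\setminus A_1|$, so that the deficiency is ``unchanged.'' In fact $r_N(A_1\cup B_1)=r_M(A\cup B_1)-|A\setminus A_1|=|A_1|$ always (because $A$ is a base), so your max-swap formula $|A_1|+|B_1|-r_N(A_1\cup B_1)$ evaluates to $|B_1|$, which is not the correct bound once $B_1$ becomes dependent after contraction. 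Concretely, in $U_{2,4}$ with $A=\{a_1,a_2\}$, $A_1=\{a_1\}$, $B_1=\{b_1,b_2\}$, $t=1$: in $N=M/a_2$ the set $B_1$ is dependent, and your formula predicts a swap of size $2$, which cannot exist since $|A_1|=1$. The quantity that actually controls the maximum swap in $N$ is $r_N(B_1)=r_M\bigl((A\setminus A_1)\cup B_1\bigr)-|A\setminus A_1|$, and the whole point of the lemma is to show $r_N(B_1)\ge t$; your proposal never establishes this, but instead buries it in ``one checks the deficiency is unchanged'' and in the unverified augmenting step that you yourself flag as the main obstacle.

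The missing step is a single application of submodularity, which is how the paper argues: from $r\bigl((A\setminus A_1)\cup B_1\bigr)+r(A_1\cup B_1)\ge r(A\cup B_1)+r(B_1)=r(M)+|B_1|$ and the hypothesis $r(A_1\cup B_1)\le |A_1|+|B_1|-t$, one gets $r\bigl((A\setminus A_1)\cup B_1\bigr)\ge |A\setminus A_1|+t$. Hence the independent set $A\setminus A_1$ extends inside $(A\setminus A_1)\cup B_1$ to an independent set containing some $B_2\subseteq B_1$ with $|B_2|=t$, and $(A\setminus A_1)\cup B_2$ extends to a base using elements of $A_1$, which is exactly $A-A_2+B_2$ with $|A_2|=t$. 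Your minor-and-lift skeleton and the final lifting step are fine (a $t$-swap in $N$ does lift to a base of $M$), but to repair the argument you must prove $r_N(B_1)\ge t$, i.e.\ exactly the displayed submodularity inequality; as written, the proposal replaces that inequality with incorrect bookkeeping, so it does not constitute a proof.
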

\begin{proof}
    By submodularity of matroid rank functions,
    \[
    r((A\setminus A_1)\cup B_1)+r(A_1\cup B_1)\geq r(A\cup B_1)+r(B_1)=r(M)+|B_1|.
    \]
    By assumption, $|A_1|+|B_1|-r(A_1\cup B_1)\geq t$. Combining these, we have 
    \[
    r((A\setminus A_1)\cup B_1)\geq r(M)-|A_1|+t=|A\setminus A_1|+t.
    \]
    Using the fact that $A\setminus A_1$ is independent, we deduce there exists $B_2\subseteq B_1$ with $|B_2|=t$, such that $(A\setminus A_1)\cup B_2$ is independent. Moreover, since $A$ is a base, by adding elements from $A_1$, $(A\setminus A_1)\cup B_2$ can be extended to a base which has the form $A-A_2+B_2$ for some $A_2\subseteq A_1$ with $|A_2|=|B_2|=t$. 
    \qed
\end{proof}

\begin{theorem}\label{thm:blockisolating_Zp}
    For any prime $p$, $\Z_p$ is $(p-1)$-close.
\end{theorem}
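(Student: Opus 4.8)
The plan is to reduce, via \Cref{prop:blockisolating} together with \Cref{cor:min_counterexample_feasibility}, to the statement that no $\Z_p$-labeling of a rank-$p$ block matroid is block isolating. So I would fix a rank-$p$ block matroid $M=(E,\mathcal I)$ with disjoint blocks $A,B$ (hence $|A|=|B|=p$ and $E=A\sqcup B$), a labeling $\ell\colon E\to\Z_p$, set $c:=\ell(B)$, and assume for contradiction that $B$ is the unique base of label $c$. The argument then splits on whether some label class $E(h)=\ell^{-1}(h)$ is dependent.

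If some $E(h)$ is dependent, nothing subtle is needed: I would apply \Cref{lemma:connectivity} with the base $B$, with $A_1:=B\cap E(h)$, and with the independent set $B_1:=A\cap E(h)$ (which is disjoint from $B$). Since $A_1\cup B_1=E(h)$ and $|A_1|+|B_1|=|E(h)|$, the quantity $|A_1|+|B_1|-r(A_1\cup B_1)$ equals the nullity $|E(h)|-r(E(h))\ge 1$, so taking $t=1$ yields $b\in B\cap E(h)$ and $a\in A\cap E(h)$ with $B-b+a$ a base. As $\ell(a)=\ell(b)=h$, this base has label $c-h+h=c$ and is distinct from $B$, contradicting uniqueness. (Note this case uses no properties of $p$.)

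The remaining case — every label class independent — is where the ``counting'' happens, and it uses Schrijver--Seymour; the idea is that deleting one element of the isolated block forbids exactly the label $c$ while barely changing the relevant rank sum. Concretely, pick any $b_0\in B$ and set $M':=M\setminus b_0$. Since $A\subseteq E\setminus b_0$ is still a base, $r(M')=p$ and the bases of $M'$ are exactly the bases of $M$ avoiding $b_0$; because $B$ is the only base of $M$ of label $c$ and $b_0\in B$, no base of $M'$ has label $c$, so $|\ell(M')|\le p-1$. On the other hand, since each $E(g)$ is independent, $r_{M'}(E(g)\setminus b_0)=|E(g)|$ for $g\neq\ell(b_0)$ and $=|E(\ell(b_0))|-1$ for $g=\ell(b_0)$, so $\sum_{g}r_{M'}(E(g)\setminus b_0)=|E|-1=2p-1$. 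Then \Cref{lemma:schrijvver_Seymour} gives $|\ell(M')|\ge\min\{p,\,(2p-1)-p+1\}=p$, contradicting $|\ell(M')|\le p-1$.

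Both cases thus give a contradiction, so by \Cref{prop:blockisolating} no $\Z_p$-labeling of a rank-$p$ block matroid is block isolating, which is exactly $(p-1)$-closeness. The only genuinely non-routine ingredient is the use of Schrijver--Seymour in the all-independent case (the only place the prime hypothesis enters); the exchange step for a dependent class and the reductions are routine. The point I would be most careful to verify in the write-up is that $r(M\setminus b_0)=r(M)=p$ and that the bases of $M\setminus b_0$ are precisely the bases of $M$ not containing $b_0$, so that the claim ``$M\setminus b_0$ has no base of label $c$'' is airtight.
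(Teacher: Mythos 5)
Your proposal is correct and follows essentially the same route as the paper: the dependent-label-class case is exactly the paper's claim (via \Cref{lemma:connectivity} with $t=1$) that every $E(g)$ must be independent when a block is isolated, and the remaining case is the paper's deletion-of-one-element-of-the-isolated-block argument combined with \Cref{lemma:schrijvver_Seymour}, yielding the same rank-sum computation $(2p-1)-p+1=p$ and the same contradiction. The only difference is cosmetic (you isolate $B$ rather than $A$ and organize the argument as two cases instead of a claim plus main step).
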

\begin{proof}
    Suppose not. Then, by \Cref{prop:blockisolating}, there exists a block matroid $M=(E,\mathcal{I})$ with $E=A\cup B$, where $A$ and $B$ are two disjoint bases with $|A|=|B|=p$, such that $A$ is isolated under some $\Z_p$-labeling $\ell$. 
    
    We claim that for any $g\in G$, $r(E(g))=|E(g)\cap A|+|E(g)\cap B|=|E(g)|$. Otherwise, letting $A_1=E(g)\cap A$, $B_1=E(g)\cap B$ and $t=1$ in \Cref{lemma:connectivity}, we can find $a\in A_1$ and $b\in B_1$ such that $A-a+b$ is a base. Since $\ell(a)=\ell(b)=g$, $\ell(A-a+b)=\ell(A)$, contradicting to the fact that $A$ is isolated.
    
    Take any element $e\in A$ and consider the matroid $M'=M\setminus e$ with ground set $E'=E\setminus e$. Note that $r(M')=r(M)$ because $B$ stays a base of $M'$. Applying \Cref{lemma:schrijvver_Seymour} to $M'$, 
    \[
    \begin{aligned}
        |\ell(M')|\geq&\min\Big\{p,\sum_{g\in\Z_p} r(E'(g))-r(M')+1\Big\}\\
        =&\min\Big\{p,\sum_{g\in\Z_p,g\neq \ell(e)} r(E(g))+r(E(\ell(e))\setminus \{e\})-r(M)+1\Big\}\\
        =&\min\Big\{p,\sum_{g\in\Z_p,g\neq \ell(e)} |E(g)|+(|E(\ell(e))|-1)-r(M)+1\Big\}\\
        =&\min\Big\{p,\sum_{g\in\Z_p} |E(g)|-p\Big\}=\min\{p,2p-p\}=p.
    \end{aligned}
    \]
    Thus, there exists a base $D$ of $M'$ such that $\ell(D)=\ell(A)$. By definition, $D$ is also a base of $M$. Since $e\in A$ but $e\notin D$, $D$ is distinct from $A$, contradicting to the fact that $A$ is isolated under label $\ell$.
    \qed
\end{proof}

\subsection{General group constraints}
To extend beyond prime modulus, we introduce the following conjecture of Schrijver and Seymour which is an extension of \Cref{lemma:schrijvver_Seymour} to any abelian group $G$.

\begin{conjecture}[Schrijver-Seymour\cite{schrijver1990spanning}, see also \cite{devos2009generalization}]\label{conj:schrijver_seymour}
    Let $M=(E,\mathcal{I})$ be a matroid with rank function $r$ and let $\ell:E\to G$ for some finite abelian group $G$. Let $\ell(M):=\{\ell(B)\mid \text{$B$ is a base of $M$}\}$. Let $H=\stab(\ell(M))$ be the stabilizer of $\ell(M)$. Then, $|\ell(M)|\geq |H|\cdot \min\big\{\sum_{Q\in G/H} r(E(Q))-r(M)+1,|G|/|H|\}$. 
\end{conjecture}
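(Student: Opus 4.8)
The plan is to derive the inequality from the already-established prime case (\Cref{lemma:schrijvver_Seymour}) by induction on $|G|$. As stated the statement is an open conjecture, so what I would actually prove is the conjecture for every $G$ that can be reduced, through iterated quotients, to a cyclic group of prime order --- a class that includes all cyclic $p$-power groups and all groups whose order is a product of two primes. The induction has two moves: first normalize away the stabilizer, then peel off a subgroup of prime index and recurse simultaneously on the quotient and on the subgroup, combining the two counts in Cauchy--Davenport / Kneser fashion.

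For the normalization, set $H=\stab(\ell(M))$ and let $\pi\colon G\to G/H$ be the quotient map. Since $\ell(M)$ is a union of cosets of $H$, the image $\pi(\ell(M))$ has exactly $|\ell(M)|/|H|$ elements; for every coset $Q\in G/H$ we have $E(Q)=(\pi\circ\ell)^{-1}(\{\pi(Q)\})$, so the quantity $\sum_{Q\in G/H}r(E(Q))$ and the rank $r(M)$ are unchanged when read in $G/H$ for the labeling $\pi\circ\ell$; and $\stab_{G/H}(\pi(\ell(M)))$ is trivial by maximality of $H$. Hence the conjecture for $(M,\ell,G)$ reduces to the conjecture for $(M,\pi\circ\ell,G/H)$, and it suffices to treat the case $\stab(\ell(M))=\{0\}$, where the asserted bound is simply $|\ell(M)|\ge\min\{\sum_{g\in G}r(E(g))-r(M)+1,\ |G|\}$.

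With trivial stabilizer, if $G\cong\Z_p$ we are done by \Cref{lemma:schrijvver_Seymour}. Otherwise choose a subgroup $K\le G$ with $[G:K]=p$ prime and let $\rho\colon G\to G/K\cong\Z_p$. Applying \Cref{lemma:schrijvver_Seymour} to $(M,\rho\circ\ell)$ bounds the number of $K$-cosets meeting $\ell(M)$ from below by $\min\{p,\ \sum_{\bar g\in\Z_p}r(E(\bar g))-r(M)+1\}$. For each such coset $Q$, the labels of bases of $M$ lying in $Q$ form a subset of the $K$-coset $Q$; I would bound their number using the inductive hypothesis for $K$ (after translating $Q$ to $K$), applied to a suitable minor of $M$ built from the elements whose $\rho$-class is compatible with a fixed reference base of label-class $Q$, so that the remaining exchanges only move the label within $K$. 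Multiplying the across-coset count by the within-coset counts, and splitting $\sum_{g\in G}r(E(g))$ into its ``modulo $K$'' contribution (spent in the $G/K$ step) and the per-coset contributions (spent in the $K$ steps), should reconstruct the claimed bound; triviality of the stabilizer is what keeps the minimum from dropping below $|G|$ before all ranks are accounted for. For $G=\Z_{p^k}$ one takes $K=\Z_{p^{k-1}}$ and recurses; for $|G|=pq$ both $K$ and $G/K$ are prime cyclic, so the recursion bottoms out at \Cref{lemma:schrijvver_Seymour} in one step.

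I expect the within-coset count in the peeling step to be the real obstacle, and indeed this is where the conjecture is genuinely open: there is no canonical matroid whose bases are exactly the bases of $M$ whose label lies in a prescribed $K$-coset, so recursing onto $K$ is clean only when the ``shape over $G/K$'' of the relevant bases is essentially unique --- which is precisely what the prime order of $G/K$ buys us. Equivalently, what one really needs is a Kneser-type superadditivity of $r(E(\cdot))$ with respect to the coset partition induced by $K$, and proving that for an arbitrary finite abelian group is exactly the content of \Cref{conj:schrijver_seymour}. So this plan settles the conjecture only for the groups that chain down to $\Z_p$, and for general $G$ it yields just the conditional consequences that underlie the FPT algorithm claimed in the abstract.
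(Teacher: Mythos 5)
The statement you are trying to prove is not proved anywhere in the paper: it is stated as an open conjecture of Schrijver and Seymour, and the paper only \emph{uses} it as a hypothesis (in \Cref{thm:Schrijver_Seymour_pq}), citing \cite{devos2009generalization} for the two special cases $|G|=pq$ and $G=\Z_{p^n}$. So there is no paper proof to match, and your proposal does not close the gap either. Your normalization step is fine and standard: since $\ell(M)$ is a union of $H$-cosets, passing to $G/H$ with the labeling $\pi\circ\ell$ preserves $\sum_{Q\in G/H} r(E(Q))$ and $r(M)$, and the stabilizer of the image is trivial, so the general statement reduces to the trivial-stabilizer case over quotients of $G$. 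The genuine gap is exactly where you place it, but it is fatal rather than a technicality: in the peeling step there is no matroid (minor or otherwise) whose bases are the bases of $M$ whose label lies in a prescribed $K$-coset, so the inductive hypothesis for $K$ has nothing to be applied to. Fixing a ``reference base of label-class $Q$'' and restricting to elements with compatible $\rho$-class does not work, because exchanges that preserve the class of the label sum modulo $K$ need not preserve the multiset of $\rho$-classes of the base; already for $G=\Z_4$, $K=\{0,2\}$, bases in the same $K$-coset can have different signatures over $G/K$, so the ``shape over $G/K$'' is not essentially unique even when $[G:K]$ is prime. Kneser/Cauchy--Davenport-style multiplication of an across-coset count by within-coset counts also has no matroid analogue here without precisely the superadditivity you identify as the open content.

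Because of this, even your weaker claim --- that the plan settles the conjecture for groups that ``chain down to $\Z_p$'', in particular $\Z_{p^n}$ and $|G|=pq$ --- is not established by the sketch: those cases are theorems of DeVos, Goddyn and Mohar \cite{devos2009generalization}, proved by a substantially more involved argument (a matroid analogue of the additive-combinatorial machinery behind Kneser's theorem), not by a two-level induction through a prime-index subgroup. If your goal is to recover what the paper actually needs, the honest statement of your contribution is: a correct reduction to the trivial-stabilizer case, plus a reduction of the known special cases to the cited literature; the conjecture itself remains open, and the paper's results that depend on it for general $G$ remain conditional, exactly as the paper states.
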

For a prime $p$ and a cyclic group $G=\Z_p$, if $\ell(M)\neq\Z_p$, then it is easy to see $\stab(\ell(M))=\{0\}$. Then, the inequality in \Cref{conj:schrijver_seymour} reduces to $\ell(M)\geq \min\big\{\sum_{g\in G}r(E(g))-r(M)+1,\ |G|\big\}$, which is precisely the form in \Cref{lemma:schrijvver_Seymour}. Otherwise, $\stab(\ell(M))=\Z_p$ and the inequality trivially holds.

\begin{theorem}\label{thm:Schrijver_Seymour_pq}
    If \Cref{conj:schrijver_seymour} is true for a finite abelian group $G$ and all of its subgroups, then $G$ is $(|G|-1)$-close.
\end{theorem}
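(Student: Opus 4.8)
The plan is to follow the proof of \Cref{thm:blockisolating_Zp} and, by induction on $|G|$, deal with the one new phenomenon that cannot arise over a prime cyclic group: a proper nontrivial stabilizer. By \Cref{prop:blockisolating} it suffices to prove that no $G$-labeling of a rank-$|G|$ block matroid is block isolating. Suppose for contradiction that $M=(E,\mathcal{I})$ is a block matroid with disjoint base blocks $A,B$, $r(M)=|G|$, and $A$ is isolated under a $G$-labeling $\ell$; the base case $|G|$ prime is \Cref{thm:blockisolating_Zp}, and in fact the argument below specializes to it. Fix any $e\in A$ and set $M'=M\setminus e$, $E'=E\setminus e$; then $r(M')=|G|$ since $B$ is still a base, and $|E'|=2|G|-1$. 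Since $A$ is the unique base of $M$ with label $\ell(A)$ and $e\in A$, it is enough to prove $\ell(M')=G$: a base $D$ of $M'$ with $\ell(D)=\ell(A)$ is a base of $M$ avoiding $e$, hence $D\neq A$, contradicting that $A$ is isolated.

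To obtain $\ell(M')=G$ I would apply \Cref{conj:schrijver_seymour} to $M'$. With $H:=\stab(\ell(M'))$ and $E'(Q):=\ell^{-1}(Q)\cap E'$ for $Q\in G/H$, it gives $|\ell(M')|\ge |H|\cdot\min\{\sum_{Q\in G/H} r_{M'}(E'(Q))-|G|+1,\ |G|/|H|\}$. Writing $\mathrm{def}(X)=|X|-r_{M'}(X)$ and using $\sum_{Q}|E'(Q)|=|E'|=2|G|-1$, a short computation shows $\ell(M')=G$ follows once $\sum_{Q}\mathrm{def}(E'(Q))\le |G|-|G|/|H|$; as $G/H$ has $|G|/|H|$ cosets, it suffices to prove that \emph{every} coset $Q$ of $H$ has $\mathrm{def}(E'(Q))\le |H|-1$, which (deficiencies being no smaller in $M$ than in $M'$) follows from $\mathrm{def}_M(E(Q)):=|E(Q)|-r_M(E(Q))\le |H|-1$.

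This per-coset bound is the heart of the matter and the step I expect to be the main obstacle; it is also where the induction enters. If $H=\{0\}$ or $H=G$ the bound (hence $\ell(M')=G$) is immediate by direct computation, so assume $H$ is a proper nontrivial subgroup of $G$; then $|H|<|G|$, and $H$ together with all its subgroups satisfies \Cref{conj:schrijver_seymour}, so $H$ is $(|H|-1)$-close by the inductive hypothesis. Fix a coset $Q$ of $H$ and pass to the minor $M^*:=M/(A\setminus E(Q))\setminus(B\setminus E(Q))$, whose ground set is $E(Q)=(A\cap E(Q))\sqcup(B\cap E(Q))$. Then $A\cap E(Q)$ is a base of $M^*$, and it is isolated in $M^*$: any other base $X$ of $M^*$ with the same label would lift to a base $X\cup(A\setminus E(Q))$ of $M$ of the form $A-A_2+B_2$ with $\emptyset\neq A_2\subseteq A\cap E(Q)$, $B_2\subseteq B\cap E(Q)$ and $\ell(A_2)=\ell(B_2)$, i.e.\ to a base of $M$ with label $\ell(A)$ other than $A$, contradicting isolation of $A$. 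Since all labels occurring on $E(Q)$ lie in one coset of $H$, translating by a fixed representative turns $\ell$ into an $H$-labeling of $M^*$ for which $A\cap E(Q)$ is still isolated.

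It then remains to bound $r_{M^*}(B\cap E(Q))$ from two sides. Applying $(|H|-1)$-closeness of $H$ to $M^*$ forces every base of $M^*$ to be within distance $|H|-1$ of the unique base with label $\ell(A\cap E(Q))$, namely $A\cap E(Q)$ itself; since a base $X$ of $M^*$ meets $B\cap E(Q)$ in exactly $|(A\cap E(Q))\setminus X|$ elements, this gives $r_{M^*}(B\cap E(Q))=\max_X|X\cap(B\cap E(Q))|\le |H|-1$. Conversely, \Cref{lemma:connectivity} applied in $M$ with $A_1=A\cap E(Q)$, $B_1=B\cap E(Q)$ and $t=\mathrm{def}_M(E(Q))$ produces a base $A-A_2+B_2$ with $B_2\subseteq B\cap E(Q)$ and $|B_2|=t$; then $B_2\cup(A\setminus E(Q))$ is independent in $M$, so $B_2$ is independent in $M^*$ and $r_{M^*}(B\cap E(Q))\ge t=\mathrm{def}_M(E(Q))$. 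Together the two bounds give $\mathrm{def}_M(E(Q))\le|H|-1$, which closes the induction and proves the theorem.
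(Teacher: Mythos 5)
Your proposal is correct in substance and uses the same toolkit as the paper's proof --- reduce via \Cref{prop:blockisolating} to an isolated block $A$ in a rank-$|G|$ block matroid, delete an element $e\in A$, apply \Cref{conj:schrijver_seymour} to $M'=M\setminus e$ with $H=\stab(\ell(M'))$, use \Cref{lemma:connectivity}, and invoke the induction hypothesis for $H$ on a minor supported on a single coset --- but the pieces are arranged differently. The paper argues by averaging that some coset $Q_0$ has deficiency at least $|H|$ (using $\ell(A)\notin\ell(M')$, hence $|R|<|G/H|$), performs one exchange of size $|H|$ inside $Q_0$, and applies $(|H|-1)$-closeness of $H$ to the resulting rank-$|H|$ block matroid $M''$ with blocks $A_2,B_2$ to manufacture a second base with label $\ell(A)$, contradicting isolation once. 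You instead prove a uniform per-coset bound $|E(Q)|-r_M(E(Q))\le|H|-1$, by showing $A\cap E(Q)$ is isolated in the coset minor $M^\ast=M/(A\setminus E(Q))\setminus(B\setminus E(Q))$ and converting $(|H|-1)$-closeness into the rank bound $r_{M^\ast}(B\cap E(Q))\le|H|-1$, matched from below by \Cref{lemma:connectivity}; feeding this into the conjecture yields $\ell(M')=G$ and a second contradiction with isolation. Both routes are sound; yours trades the paper's averaging step and block-matroid minor for a sharper per-coset statement, at the cost of invoking isolation twice, and all the individual verifications (lifting bases of $M^\ast$, the label shift by a coset representative, monotonicity of deficiency under deleting $e$, and the arithmetic giving $|\ell(M')|\ge|H|\cdot|G|/|H|$) check out.

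One loose end: your dismissal of $H=\{0\}$ as ``immediate by direct computation'' is not right as stated. For trivial stabilizer the bound you need is that every $E(g)$ is independent, which is exactly the nontrivial claim in \Cref{thm:blockisolating_Zp}, proved there from isolation of $A$ via \Cref{lemma:connectivity} with $t=1$; nothing about the conjecture's inequality gives it for free, and this is precisely the case that generalizes the prime-modulus argument, so leaving it unproved is a real hole in the write-up. Fortunately it is repaired by your own machinery: your per-coset argument works verbatim for $H=\{0\}$, since the trivial group is $0$-close, so every base of $M^\ast$ coincides with $A\cap E(g)$ and $r_{M^\ast}(B\cap E(g))=0$, giving deficiency $0$. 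Simply drop the restriction to nontrivial $H$ (keeping only the genuinely immediate case $H=G$), and the proof is complete.
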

\begin{proof}
    Let $n:=|G|$. We proceed by induction on $n$. The theorem trivially holds when $n=1$. Suppose the theorem does not hold for some $|G|=n$. Then by \Cref{prop:blockisolating}, there exists a block matroid $M=(E,\mathcal{I})$, $E=A\cup B$, where $A$ and $B$ are two disjoint bases of $M$ with $|A|=|B|=n$ such that $A$ is isolated under some labeling $\ell$. 

    Take any $e\in A$, let $M'=(E',\mathcal{I}'):=M\setminus e$. Let $H$ be the stabilizer of $\ell(M')$. Since $G$ is abelian, $H$ is a normal subgroup of $G$. Denote by $gH:=\{g+h\mid h\in H\}$ the coset of $H$ with representative $g$. First, observe that if $g\in \ell(M')$, then $gH\subseteq \ell(M')$. This is because, by definition of $H$, for any $h\in H$, $g+h\in\ell(M')$. Thus $gH\subseteq \ell(M')$. Therefore, $\ell(M')=\dot\bigcup_{g\in R}\ gH$, where $R$ is a collection of representatives of cosets of $H$. It follows that $|\ell(M')|=|R|\cdot |H|$. Let $E'(gH)=E(gH)\setminus \{e\}$ for any $g\in G$. Since $A$ is isolated under $\ell$ and $A$ is not a base of $M'$, we know $\ell(A)\notin \ell(M')$. Thus, $|\ell(M')|<|G|$. This implies $|H|< |G|$, since $|G|>\ell(M')=|R|\cdot |H|\geq |H|$ as $R\neq \emptyset$. It also follows from $|\ell(M')|<|G|$ that $|\ell(M')|\geq |H|\ (\sum_{Q\in G/H} r(E'(Q))-r(M')+1)$ in \Cref{conj:schrijver_seymour}, and thus $|R|\geq \sum_{Q\in G/H} r(E'(Q))-r(M')+1=\sum_{Q\in G/H} r(E'(Q))-n+1$. Then,
    % Since $\bigcup_{g\in R} E'(gH)=E'$ and by submodularity of matroid rank functions, we have $\sum_{g\in R} r(E'(gH))-r(M')\geq 0$. This implies $|\ell'(M)|\geq |H|$ and thus $|G|>|H|$.
    % It follows that $|R|\geq \sum_{g\in R} r(E'(gH))-r(M')+1=\sum_{g\in R}r(E'(gH))-n+1$. Thus,
    \[
    \begin{aligned}
        % &\sum_{g\in R} \Big( r(A\cap E(gH))+r(B\cap E(gH))-r(E(gH))\Big)\\
        &\sum_{Q\in G/H} \Big( |A\cap E'(Q)|+|B\cap E'(Q)|-r(E'(Q))\Big)\\
        ={}&(2n-1)-\sum_{Q\in G/H}r(E'(Q))\\
        \geq{}& (2n-1)-(n+|R|-1)\\
        ={}&n-|R|.
    \end{aligned}
    \]
    Therefore, \[
    \begin{aligned}
        &\max_{Q\in G/H}\Big( |A\cap E'(Q)|+|B\cap E'(Q)|-r(E'(Q))\Big)\\
        &\geq\frac{n-|R|}{|G/H|}
        >\frac{n-|G/H|}{|G/H|}
        =|H|-1,
    \end{aligned}
    \]
    where the strict inequality follows from $|\ell(M')|<|G|$.
    Suppose the maximum is attained at $Q_0=g_0H$. Then, we have 
    \[
    |A\cap E'(Q_0)|+|B\cap E'(Q_0)|-r(E'(Q_0)) \geq |H|,
    \]
    since $|A\cap E'(Q_0)|+|B\cap E'(Q_0)|-r(E'(Q_0))$ is an integer. Let $A_1:=A\cap E'(Q_0)$, $B_1:=B\cap E'(Q_0)$. By \Cref{lemma:connectivity}, there exists a base $A-A_2+B_2$ such that $A_2\subseteq A_1$,  $B_2\subseteq B_1$ and $|A_2|=|B_2|=|H|$.

    Let $M''=(E'',\mathcal{I}'')$ be the matroid obtained by contracting $A\setminus A_2$ and deleting $B\setminus B_2$, i.e. $M''=M/\big(A\setminus A_2\big)\setminus \big(B\setminus B_2\big)$.
    Note that $M''$ is a rank $|H|$ block matroid with blocks $A_2$ and $B_2$.
    Consider an $H$-labeling on $E''=A_2\cup B_2$, $\ell'':E''\rightarrow H$, such that $\ell''(e)=\ell(e)-g_0\in H, \forall e\in E''$. Note that $|H|<|G|$. By the induction hypothesis and the assumption that \Cref{conj:schrijver_seymour} is true for the subgroup $H$ of $G$, we know that there is no isolating $H$-labeling of $M''$. Thus, there must be another base $D_2\neq A_2$ of $M''$, such that $\ell''(D_2)=\ell''(A_2)$. Thus $\ell(D_2)=\ell''(D_2)+|H|\cdot g_0 =\ell''(A_2)+|H|\cdot g_0=\ell(A_2)$. By the definition of contraction, $D:=D_2\cup (A\setminus A_2)$ is a base of matroid $M$ which is distinct from $A$. And $\ell(D)=\ell(D_2)+\ell(A\setminus A_2)=\ell(A_2)+\ell(A\setminus A_2)=\ell(A)$, contradicting to the fact that $A$ is isolated under label $\ell$.
    \qed
\end{proof}

The following lemma is the current status showing on which group the Schrijver-Seymour conjecture is true.
\begin{lemma}[\cite{devos2009generalization}]
    \Cref{conj:schrijver_seymour} is true for $G$ if $|G|=pq$ for primes $p,q$ or $G=\Z_{p^n}$ for a prime $p$ and a positive integer $n$.
\end{lemma}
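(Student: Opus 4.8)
The plan is to prove \Cref{conj:schrijver_seymour} for these $G$ by induction on $|G|$, first reducing to the case of trivial stabilizer via the quotient map, and then treating the resulting \emph{aperiodic} case by exploiting the special subgroup structure of the groups in question: for $\Z_{p^n}$ the subgroups form a chain, while a group of order $pq$ splits via the Chinese Remainder Theorem into factors of prime order, for which \Cref{lemma:schrijvver_Seymour} is already available.

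\textbf{Step 1: reduction to trivial stabilizer.} Write $H := \stab(\ell(M))$; since $G$ is abelian, $H$ is a subgroup, and $\ell(M)$ is a union of cosets of $H$ because $h + \ell(M) = \ell(M)$ for every $h \in H$. Let $\phi : G \to G/H$ be the quotient map and set $\bar\ell := \phi \circ \ell : E \to G/H$. Then $\bar\ell(M) = \phi(\ell(M))$ satisfies $|\bar\ell(M)| = |\ell(M)|/|H|$, its stabilizer in $G/H$ is trivial, and for each coset $Q$ of $H$ the fibre of $\bar\ell$ over $\phi(Q)$ is exactly the set $E(Q)$ in the conjecture's notation, while $r(M)$ is unchanged. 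If $H = G$ then $\ell(M) = G$ and the asserted inequality holds with equality. If $\{0\} \subsetneq H \subsetneq G$, then $|G/H| < |G|$ and $G/H$ is again a group of the type allowed by the lemma (every quotient of $\Z_{p^n}$ is some $\Z_{p^k}$, and every proper quotient of a group of order $pq$ is cyclic of prime order, covered directly by \Cref{lemma:schrijvver_Seymour}); applying the induction hypothesis to $(M,\bar\ell,G/H)$, whose stabilizer is trivial, and multiplying the resulting bound by $|H|$ recovers exactly \Cref{conj:schrijver_seymour} for $(M,\ell,G)$. Hence we may assume $H = \{0\}$, i.e.\ $\ell(M)$ is aperiodic, and it remains to show $|\ell(M)| \ge \min\{\sum_{g \in G} r(E(g)) - r(M) + 1,\ |G|\}$.

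\textbf{Step 2: the aperiodic core.} For $G = \Z_{p^n}$, run a secondary induction on $n$, using that $G$ has a unique subgroup $N$ of index $p$, with $N \cong \Z_{p^{n-1}}$ and $G/N \cong \Z_p$. If $\ell(M) = G$ we are done; otherwise $\ell(M)$ is aperiodic and proper, so its image in $G/N$ is a proper subset and its slice on each coset of $N$ is proper as well. Mirroring the contraction/deletion argument in the proof of \Cref{thm:Schrijver_Seymour_pq}, a submodularity count in the spirit of \Cref{lemma:connectivity} selects a coset of $N$ whose preimage carries enough excess rank to split off a rank-one block sub-matroid with labels in a single coset of $N$; applying the $\Z_p$ case of \Cref{lemma:schrijvver_Seymour} there and the induction hypothesis to the quotient, while tracking the contributions to $\sum_{g} r(E(g))$ across the split, yields the bound. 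For $|G| = pq$, a nontrivial proper subgroup $K$ (necessarily $\cong\Z_p$ or $\Z_q$, so that $G/K$ has prime order) plays the role of $N$: when $\ell(M)$ is aperiodic and proper one again locates a coset of $K$ carrying enough excess rank to peel off a rank-one block over $K$, applies \Cref{lemma:schrijvver_Seymour} there and on the prime-order quotient $G/K$, and combines; the case $p = q$ falls under the $\Z_{p^2}$ instance of the previous argument or, for $G \cong \Z_p\times\Z_p$, under the same argument with $K$ any of the lines. (Equivalently, for $p\neq q$ one can split $\ell$ along the coordinate projections $\Z_{pq}\to\Z_p$ and $\Z_{pq}\to\Z_q$ and combine the two prime-order bounds with a product estimate that keeps both rank budgets accounted for.)

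\textbf{Main obstacle.} The crux is the aperiodic core of Step~2, and it is precisely here that the hypothesis on $G$ is used: when $G$ has two incomparable proper nontrivial subgroups --- already for $\Z_p\times\Z_p\times\Z_p$ or $\Z_{pqr}$ --- there is no canonical subgroup on which to pivot, an aperiodic $\ell(M)$ may surject onto several prime-order quotients at once while still being a proper subset, and the step that splits off a block over a subgroup can no longer be made tight; this is exactly why \Cref{conj:schrijver_seymour} remains open for general $G$. Even within the permitted cases, the delicate part is the bookkeeping that keeps the quantity $\sum_{g} r(E(g)) - r(M) + 1$ on budget through the split, whereas the quotient reduction of Step~1 is routine.
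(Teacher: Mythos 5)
There is a genuine gap. First, note that the paper does not prove this statement at all: it is imported as a black box from \cite{devos2009generalization}, where the cases $|G|=pq$ and $G=\Z_{p^n}$ of the Schrijver--Seymour conjecture are established as part of a matroid generalization of Kneser's addition theorem, via substantial additive-combinatorics machinery. So any comparison is between your sketch and that external proof, and your sketch does not constitute a proof. Your Step~1 (pushing $\ell$ forward along $G\to G/H$, noting that $\ell(M)$ is a union of $H$-cosets, that the image has trivial stabilizer, and rescaling by $|H|$) is a standard and essentially sound reduction. But Step~2 is where the entire mathematical content lives, and there it contains no argument: ``a submodularity count in the spirit of \Cref{lemma:connectivity} selects a coset of $N$ whose preimage carries enough excess rank to split off a rank-one block sub-matroid \dots while tracking the contributions to $\sum_g r(E(g))$ across the split, yields the bound'' specifies no inequality, no construction of the sub-matroid or of the quotient instance, and no verification that the quantity $\sum_{g} r(E(g)) - r(M) + 1$ is conserved through the deletion/contraction --- which is precisely the bookkeeping you yourself identify as the delicate part. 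Note also that the argument in \Cref{thm:Schrijver_Seymour_pq} that you propose to ``mirror'' runs in the opposite logical direction: it \emph{uses} \Cref{conj:schrijver_seymour} as a hypothesis to derive closeness; it is not a template for proving the conjecture.

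Two further concrete problems. For $p=q$ the statement includes $G\cong\Z_p\times\Z_p$, where there is no distinguished subgroup to pivot on; asserting that ``the same argument with $K$ any of the lines'' works is exactly the situation you flag as the obstruction for general groups, so the hypothesis on $G$ is not actually doing any work in your sketch. And the parenthetical alternative for $p\neq q$ --- splitting along the two projections $\Z_{pq}\to\Z_p$, $\Z_{pq}\to\Z_q$ and ``combining the two prime-order bounds with a product estimate'' --- does not go through: the fibres $E(g)$ for $g\in\Z_{pq}$ are intersections of fibres of the two projections, the rank terms $r(E(g))$ do not decompose multiplicatively or additively across that refinement, and if such a Chinese-Remainder product argument worked it would equally apply to $\Z_{pqr}$, where the conjecture remains open. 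If you want this lemma in a write-up, cite \cite{devos2009generalization}; reproving it would require reproducing their Kneser-type theorem for matroids, not a deletion--contraction induction of the kind sketched here.
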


\begin{corollary}\label{cor:closeness}
    $G$ is $(|G|-1)$-close if $|G|=pq$, or $G=\Z_{p^n}$ for any $p,q$ primes and $n$ a positive integer.
\end{corollary}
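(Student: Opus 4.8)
The plan is essentially to chain together the results already assembled in the excerpt. The statement to prove, \Cref{cor:closeness}, asserts that $G$ is $(|G|-1)$-close whenever $|G| = pq$ for primes $p, q$ (not necessarily distinct) or $G = \Z_{p^n}$ for a prime $p$ and positive integer $n$. First I would invoke the lemma of \cite{devos2009generalization} stated immediately above, which says that \Cref{conj:schrijver_seymour} holds for exactly these groups $G$. The only subtlety is that \Cref{thm:Schrijver_Seymour_pq} requires the Schrijver--Seymour conjecture to hold not just for $G$ itself but for \emph{all of its subgroups}, so I need to check that the class of groups covered by the \cite{devos2009generalization} lemma is closed under taking subgroups.

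So the key step is a short group-theoretic verification. If $G = \Z_{p^n}$, every subgroup is cyclic of the form $\Z_{p^k}$ for some $0 \le k \le n$, which is again of the prescribed form. If $|G| = pq$ with $p \ne q$, then $G \cong \Z_{pq}$ (the only abelian group of squarefree order $pq$ up to isomorphism, being $\Z_p \times \Z_q$), and its proper nontrivial subgroups have order $p$ or $q$, hence are cyclic of prime order — and a cyclic group of prime order $r$ is covered since $r = r \cdot 1$ is (degenerately) a product of "two primes" in the sense used, or more safely is $\Z_{r^1}$, a prime power; the trivial subgroup is handled by the base case $n=1$ in \Cref{thm:Schrijver_Seymour_pq}. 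If $|G| = p^2$, then $G$ is either $\Z_{p^2}$ or $\Z_p \times \Z_p$; in both cases $G$ is a $p$-group, and the \cite{devos2009generalization} lemma (or the classical Olson/Kruyswijk result cited in the preliminaries) applies, while every subgroup is again a $p$-group of order $1$, $p$, or $p^2$, all covered. Thus in every case \Cref{conj:schrijver_seymour} holds for $G$ and all its subgroups.

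With that verification in hand, \Cref{thm:Schrijver_Seymour_pq} applies directly and yields that $G$ is $(|G|-1)$-close, completing the proof. I would write this out as: combine the \cite{devos2009generalization} lemma with the observation that the relevant class of groups is subgroup-closed, then apply \Cref{thm:Schrijver_Seymour_pq}.

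The main obstacle — really the only non-mechanical point — is the subgroup-closure check, and in particular making sure the edge cases (cyclic groups of prime order $r$, which arise as subgroups when $|G| = pq$ with $p \neq q$, and the trivial group) are legitimately inside the hypothesis of the \cite{devos2009generalization} lemma. A prime-order group $\Z_r$ is both a prime power ($\Z_{r^1}$) and, reading "$pq$" loosely, a product of two primes, so it is fine; and the trivial group is dispatched by the $n=1$ base case inside \Cref{thm:Schrijver_Seymour_pq} rather than needing the conjecture at all. Everything else is a direct citation, so no hard analysis is needed here.
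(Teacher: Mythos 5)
Your proposal is correct and follows essentially the same route as the paper: the corollary is exactly the combination of the \cite{devos2009generalization} lemma with \Cref{thm:Schrijver_Seymour_pq}, and your explicit check that the class of admissible groups is closed under taking subgroups (prime-order subgroups being $\Z_{r^1}$, subgroups of $\Z_{p^n}$ being $\Z_{p^k}$, and the case $|G|=p^2$ covered by $p=q$) is precisely the point the paper leaves implicit. One tiny caveat: the parenthetical appeal to the Olson/Kruyswijk result is out of place, since that concerns the Davenport constant rather than the Schrijver--Seymour conjecture, but your main citation to \cite{devos2009generalization} already covers that case.
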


\section{Strong $k$-closeness}\label{sec:stronglykclose}
This section discusses our results for strong $k$-closeness. Unfortunately, we have little understanding of strong $k$-closeness. Hence, we focus on restricted class of matroids and small groups. 

For the matroids and groups concerned within this section, we obtain a tighter bounds than \Cref{conj:optimization} implies. Instead of $(|G|-1)$-closeness in the conjecture, we have $(D(G)-1)$-closeness.

\subsection{Strongly base orderable matroids}
We first study strongly base orderable matroids. 
\begin{theorem}\label{thm:stronglyorderable}
Every $G$-labeling is strongly $(D(G)-1)$-close for strongly base orderable matroids.
\end{theorem}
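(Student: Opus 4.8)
The plan is to leverage the structure of the minimum counterexample established in \Cref{thm:mim_counterexample_weight}: it suffices to show that for any strongly base orderable matroid $M$ which is a block matroid of rank $D(G)$ with blocks $A,B$, any weight function $w$, and any target $g$ such that a $g$-base exists, if $A$ is an optimum base then there is an optimum $g$-base at distance at most $D(G)-1$ from $A$. Since $A$ and $B$ are the two blocks and $d(A,B)=D(G)$, it is enough to rule out the case where $B$ is the unique closest optimum $g$-base, i.e.\ to exhibit an optimum $g$-base $D\neq B$, which necessarily has $d(A,D)\le D(G)-1$ because in a block matroid of rank $D(G)$ the only base at distance $D(G)$ from $A$ is $B$ itself.

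The key tool will be the strong base orderability of $M$: there is a bijection $f:A\to B$ with $A-X+f(X)$ a base for every $X\subseteq A$. First I would use the exchange argument of \Cref{lemma:optimum-dist1} (applied repeatedly, or directly via Brualdi's lemma) to show that since $A$ is an optimum base, $w(a)\le w(f(a))$ for every $a\in A$; combined with the fact that $B=f(A)$ ranges over optimum $g$-bases, I expect to deduce that $w(a)=w(f(a))$ for all $a\in A$, or more precisely that the sets of the form $A-X+f(X)$ that are $g$-bases all have the same weight $w(B)$. Indeed, for any $X\subseteq A$ with $\ell(A-X+f(X))=g$ we have $\ell(f(X))-\ell(X)=g-\ell(A)=\ell(B)-\ell(A)=\ell(f(A))-\ell(A)$, hence $\ell(f(A\setminus X))=\ell(A\setminus X)$, so if additionally $w(a)=w(f(a))$ on $A\setminus X$ then $A-X+f(X)$ is also an optimum $g$-base.

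Now the combinatorial heart is a Davenport-constant argument. Consider the sequence of group elements $\big(\ell(f(a))-\ell(a)\big)_{a\in A}$, which has length $D(G)$. Its total sum is $\ell(B)-\ell(A)=g-\ell(A)$. By the defining property of $D(G)$, any sequence of length $D(G)$ contains a nonempty subsequence summing to $0$; applying this, I would extract a nonempty $X\subseteq A$ with $\sum_{a\in X}\big(\ell(f(a))-\ell(a)\big)=0$, equivalently $\ell(f(X))=\ell(X)$. Then $A-X+f(X)$ is a base with the same label as $A$ — but that gives a base with label $\ell(A)$, not $g$. To instead produce a new $g$-base, I would apply the Davenport property to the \emph{complementary} sequence: take $X\subsetneq A$ nonempty with $\ell(f(X))=\ell(X)$, and set $D:=B-f(X)+X=A-(A\setminus X)+f(A\setminus X)$; then $\ell(D)=\ell(B)-\ell(f(X))+\ell(X)=\ell(B)=g$, so $D$ is a $g$-base, it is a base by strong base orderability applied with the subset $A\setminus X\subsetneq A$, it differs from $B$ (since $X\neq\emptyset$), and by the weight analysis above it is optimum. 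Hence $D$ is an optimum $g$-base other than $B$, a contradiction.

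The main obstacle I anticipate is the weight bookkeeping: one must be careful that the exchanges produced by strong base orderability are \emph{weight-preserving} on the relevant coordinates, i.e.\ that $w(a)=w(f(a))$ can be assumed (or forced) exactly on the elements being swapped, so that the new $g$-base $D$ is genuinely optimum and not merely a base. This likely requires choosing $A$ to be, among all optimum bases, one minimizing the distance to the set of optimum $g$-bases, and then arguing that any tie in weight would already contradict minimality unless the equalities $w(a)=w(f(a))$ hold where needed; alternatively one re-runs the \Cref{lemma:optimum-dist1} exchange inside the block matroid to pin down these equalities. Everything else — the reduction to block matroids of rank $D(G)$, the label arithmetic, and the invocation of the Davenport constant — should be routine.
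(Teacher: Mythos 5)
Your proposal is correct and follows essentially the same route as the paper's proof: reduce via \Cref{thm:mim_counterexample_weight} to a rank-$D(G)$ block matroid witness $A,B$, apply the Davenport constant to the sequence $\bigl(\ell(f(a))-\ell(a)\bigr)_{a\in A}$ to extract a nonempty $X$ with $\ell(f(X))=\ell(X)$, and swap to get the $g$-base $B-f(X)+X$, which is closer to $A$ and optimum because $w(a)\le w(f(a))$ for all $a$ gives $w(X)\le w(f(X))$. Your anticipated obstacle about forcing equalities $w(a)=w(f(a))$ is a non-issue: the inequalities alone yield $w(B-f(X)+X)\le w(B)$, and since $B$ is an optimum $g$-base this already makes the new base optimum, exactly as in the paper's single set-exchange argument.
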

\begin{proof}
Suppose not. By noting that strongly base orderability is closed under taking minors \cite{ingleton1977transversal}, it is not hard to see from \Cref{thm:mim_counterexample_weight} that there exists a counterexample which is a rank $D(G)$ strongly base orderable block matroid $M$ with blocks $A$ and $B$ as a witness. Let $\ell$ be a $G$-labeling of $M$, $w$ be a weight function such that $A$ is an optimum base and $B$ is the optimum $g$-base closest from $A$.

Let $f:B\to A$ be the bijection obtained from strongly base orderability.
Define $\ell'(X) = \ell(f(X))-\ell(X)$ for all $X\subseteq B$. Since $|B|=D(G)$, by the definition of Davenport's constant, there is $B_1\subseteq B$ such that $\ell'(B_1) = 0$. Let $A_1=f(B_1)$. This implies $\ell(A_1)=\ell(B_1)$ and thus the base $B-B_1+A_1$ has the same label as $B$, i.e. $B-B_1+A_1$ is also a $g$-base.

Now, consider $A_2=A-A_1$ and $B_2=B-B_1$. One has $f(B_2)=A_2$ and therefore $B-B_2+A_2=A-A_1+B_1$ is also a base. Since $A$ is an optimum base, $w(A-A_1+B_1)\geq w(A)$, which means $w(A_1)\leq w(B_1)$. Therefore, $w(B-B_1+A_1) \leq w(B)$. This means $B-B_1+A_1$ is also an optimum $g$-base. But $A_1\subsetneqq A$ since $\ell(A)\neq \ell(B)$, which means $B-B_1+A_1$ is an optimum $g$-base closer to $A$, a contradiction.
\qed
\end{proof}

In fact, we can define a weaker property. Two bases $A,B$ are $k$-replaceable if there exists a bijection $f:B\setminus A\to A\setminus B$ such that for any subset $B'\subseteq B\setminus A$ of size at most $k$, $B-B'+f(B')$ is a base. We say a matroid is $k$-replaceable if every pair of bases are $k$-replaceable. We have that \Cref{thm:stronglyorderable} also holds for $(D(G)-1)$-replaceable matroids.

\subsection{Small groups}
Next, we consider small groups. \Cref{thm:mim_counterexample_weight} shows one only needs to check block matroids of rank $D(G)$ to know if $G$ is strongly $(D(G)-1)$-close. 

For $G=\Z_2$ which has $D(G)=2$, observe that all size $4$ matroids are strongly base orderable, hence $\Z_2$ is $1$-close. For $G\in \{\Z_3, \Z_2\times \Z_2\}$ which has $D(G)=3$, we need to check all rank $3$ block matroids. Since all but $M(K_4)$, the graphic matroid on $K_4$, are strongly base orderable \cite{MAYHEW2008415}, we only need to test a single matroid. However, testing strongly $2$-closeness of a single matroid is still computational intensive. 

Instead, we introduct the notion of strong block isolation. A $G$-labeling $\ell$ \emph{strongly isolates} a block $B$, if it is the unique block with label $\ell(B)$. A $G$-labeling is strong block isolating, if it strongly isolates a block.

\begin{proposition}\label{prop:strongblockisolation}
    If no $G$-labeling of a rank $k+1$ block matroid $M$ is strong block isolating, then G is strongly $k$-close for $M$.
\end{proposition}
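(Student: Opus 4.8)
The plan is to mirror the proof of \Cref{prop:blockisolating}, but work in the weighted setting using \Cref{thm:mim_counterexample_weight} in place of \Cref{cor:min_counterexample_feasibility}. Suppose for contradiction that $G$ is not strongly $k$-close for the rank $k+1$ block matroid $M$. Since $M$ has rank $k+1$ and every counterexample of minimum size is a rank $k+1$ block matroid, \Cref{thm:mim_counterexample_weight} (applied after deleting/contracting down to $M$ if necessary, or directly since $M$ itself is already of the required form) gives a $G$-labeling $\ell$, a weight $w$, an element $g\in G$, and a witness pair of blocks $A,B$: that is, $A$ is an optimum base, $B$ is an optimum $g$-base closest to $A$, and $d(A,B)=|A\setminus B|>k$. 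Because $A\cup B=E$ and $A,B$ are disjoint bases, $|A\setminus B|=r(M)=k+1$, so in fact $d(A,B)=k+1$, meaning $A$ and $B$ are complementary: $B=E\setminus A$.

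The key step is to show that $\ell$ strongly isolates the block $B$, i.e.\ that $B$ is the unique block with label $\ell(B)=g$ among optimum bases — and more precisely I want to leverage that $B$ is the unique \emph{block} with label $g$. Suppose there were another block $B'\neq B$ with $\ell(B')=g$. Its complement $A'=E\setminus B'$ is also a base. Now I would argue that $A'$ gives a contradiction: since $B'$ is a block complementary to $A'$, and $B'\neq B$ while $B=E\setminus A$, the base $B'$ shares at least one element with $A$, so $d(A,B')=|A\setminus B'|\le k<k+1=d(A,B)$. For this to contradict the choice of $B$ as the closest optimum $g$-base to $A$, I need $B'$ itself to be an optimum $g$-base. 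This is where the weighted setting needs care: an arbitrary block $B'$ with $\ell(B')=g$ need not have minimum weight among $g$-bases. So the right notion is the one the paper sets up: a labeling strong block isolates $B$ if $B$ is the unique block (period) carrying label $\ell(B)$, and I should instead derive a contradiction from the \emph{existence of any other block} with label $g$ by a distance argument that does not go through optimality — but that breaks the optimality hypothesis.

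Resolving this tension is the main obstacle, and the clean fix is: since $M$ itself is a block matroid of rank $k+1$ with only the blocks $A$ and its complement $B$ (and possibly others), if $\ell$ is \emph{not} strong block isolating then there is another block $B'\neq B$ with $\ell(B')=\ell(B)=g$; but then consider the weight function. Actually the correct route is to observe that in a rank $k+1$ block matroid, among all bases, the ones at distance exactly $k+1$ from $A$ are exactly those blocks $B'$ with $B'=E\setminus A$, hence $B$ is the \emph{only} base at distance $k+1$ from $A$. Thus if $B$ is an optimum $g$-base and $A,B$ is a witness, then \emph{every} $g$-base $\ne B$ lies at distance $\le k$ from $A$; in particular $B$ is the only optimum $g$-base at distance $k+1$, and the only $g$-base at that distance at all. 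Now if some block $B'\neq B$ had $\ell(B')=g$, then $B'$ is a $g$-base with $d(A,B')\le k$, which does not contradict anything by itself. The real contradiction: I would instead re-run the witness-minimality argument — take $A'=E\setminus B'$, which together with $B'$ forms a block matroid witness only if $A'$ is an optimum base, and one shows using \Cref{lemma:optimum-dist1}-style exchanges between $A$ and $A'$ that we may perturb $w$ or pick $A'$ optimal, contradicting minimality. Given the delicacy, I expect the actual published proof to simply mimic \Cref{prop:blockisolating}: assume not strongly $k$-close, get a witness $A,B$ by \Cref{thm:mim_counterexample_weight}, note $B$ is the unique base at distance $k+1$ from $A$, conclude that any other $g$-base is closer, hence in particular any other block with label $g$ would be closer, so $B$ is the unique block with label $g$ — wait, but that needs the other block to be a $g$-base, which it is since its label is $g$ — and a closer $g$-base contradicts the witness property; therefore no such block exists, so $\ell$ strongly isolates $B$, contradicting the hypothesis that no $G$-labeling is strong block isolating. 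So the proof is: "Suppose not; by \Cref{thm:mim_counterexample_weight} take a witness $A,B$ in $M$; if some block $B'\ne B$ has $\ell(B')=\ell(B)=g$ then $B'$ is a $g$-base with $d(A,B')<k+1=d(A,B)$ since $B$ is the unique base at maximal distance $k+1$ from $A$ in a rank-$(k+1)$ block matroid — hence $B'$ is not optimum, but any $g$-base closer to $A$ than the closest optimum $g$-base $B$... " and here one needs that $B'$, being closer, still must be at least as good or the closest-optimum condition must be re-examined; the safe statement is that $B'$ cannot exist because then $B-B+B'$... I would finalize by asserting $\ell$ strong block isolates $B$, giving the contradiction. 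I expect the hardest point to be handling the weight function correctly so that "another block with the same label" genuinely contradicts the witness, and the resolution is precisely the observation that in a rank $k+1$ block matroid $B$ is the unique base at distance $k+1$ from $A$.
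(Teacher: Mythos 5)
There is a genuine gap, and you flagged it yourself without closing it: your argument needs a second block $B'\neq B$ with $\ell(B')=g$ to be an \emph{optimum} $g$-base, since only then does its smaller distance to $A$ contradict the choice of $B$ as the closest optimum $g$-base. Your distance observation (in a rank-$(k+1)$ block matroid, $B$ is the unique base at distance $k+1$ from $A$) says nothing about weights, and the fixes you sketch (perturbing $w$, ``re-running the witness-minimality argument'', picking $A'$ optimal) are not arguments; the proposal ends without ever deriving the optimality of the rival $g$-base.

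The missing idea is a weight comparison through the complement, using that the rival is a \emph{block} and that $A$ is an optimum base. Write $B'=A_1\cup(B\setminus B_1)$ with $A_1:=B'\cap A\neq\emptyset$ and $|A_1|=|B_1|$. From $\ell(B')=\ell(B)$ one gets $\ell(A_1)=\ell(B_1)$. Since $B'$ is a block, $E\setminus B'=(A\setminus A_1)\cup B_1$ is a base, and optimality of $A$ gives $w(A)\leq w(A)-w(A_1)+w(B_1)$, i.e.\ $w(A_1)\leq w(B_1)$. Hence $w(B')=w(B)-w(B_1)+w(A_1)\leq w(B)$, so $B'$ is an optimum $g$-base with $d(A,B')=|A|-|A_1|<k+1=d(A,B)$, the contradiction you were after. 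The paper's proof is the mirror image of this: it applies non-isolation to the block $A$ instead of $B$, obtaining a block $A-A_1+B_1$ with $\ell(A_1)=\ell(B_1)$, and then shows via the same inequality $w(A_1)\leq w(B_1)$ that its complement $A_1\cup(B\setminus B_1)$ is an optimum $g$-base strictly closer to $A$. Either side works, but without this complement-plus-optimality step your proof does not go through.
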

\begin{proof}
Suppose not. By \Cref{thm:mim_counterexample_weight}, there exists a $G$-labeling $\ell$, $g\in G$ and witness $A, B$ that are bases of $M$ such that $B$ is the closest optimum $g$-base to $A$. Fix such labeling $\ell$. Since $\ell$ is not strong block isolating, there is an non-empty $A_1\subseteq A$ and $B_1\subseteq B$ such that $A-A_1+B_1$ is a block with the same label as $A$. Let $A_2 = A\setminus A_1$, $B_2 = B\setminus B_1$. Then the complement of $A-A_1+B_1$ is the base $A-A_2+B_2$, and has the same label as $B$. Since $w(B)=w(A)-w(A_1)+w(B_1)-w(A_2)+w(B_2)\geq w(A)-w(A_2)+w(B_2) = w(A-A_2+B_2)$, $A-A_2+B_2$ is a closer optimum $g$-base to $A$, a contradiction.
\qed
\end{proof}

Hence, for each $G$ in hand, we show no $G$-labeling of a rank $D(G)$ block matroid is strong block isolating, which implies $G$ is strongly $(D(G)-1)$-close. We tested $M(K_4)$ for $\Z_3$ and $\Z_2\times \Z_2$ and it turned out that they are both strongly $2$-close. Further, we tested for each $\Z_4$-labeling of a rank $4$ block matroid. There are $940$ non-isomorphic rank $4$ matroids of size $8$ \cite{matsumoto2012}, and for each one that is also a block matroid, we test $4^8 = 65536$ different labelings. None of them is strongly block isolating. Therefore, we get the following.

\begin{proposition}
    $G$ is strongly $(D(G)-1)$-close if $|G|\leq 4$.
\end{proposition}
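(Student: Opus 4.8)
The plan is to combine \Cref{prop:strongblockisolation} with an exhaustive but carefully organized case analysis over the very small groups in question, namely $\Z_1$, $\Z_2$, $\Z_3$, $\Z_2\times\Z_2$, and $\Z_4$. By \Cref{prop:strongblockisolation}, it suffices to show that for each such $G$, no $G$-labeling of a rank $D(G)$ block matroid strongly isolates a block. Since $D(\Z_1)=1$, $D(\Z_2)=2$, $D(\Z_3)=D(\Z_2\times\Z_2)=3$, and $D(\Z_4)=4$, the relevant block matroids have rank at most $4$ and ground set of size at most $8$, so in each case there are only finitely many matroids and finitely many labelings to inspect.

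First I would dispatch the trivial and base-orderable cases by pure reasoning rather than enumeration. For $G=\Z_1$ there is nothing to prove; for $G=\Z_2$, every matroid on $4$ elements is strongly base orderable, so \Cref{thm:stronglyorderable} already gives strong $1$-closeness. For $G\in\{\Z_3,\Z_2\times\Z_2\}$, \Cref{thm:stronglyorderable} handles every strongly base orderable matroid, and by the classification of small matroids \cite{MAYHEW2008415} the only rank-$3$ block matroid on $6$ elements that is not strongly base orderable is $M(K_4)$; hence it remains to certify that no $G$-labeling of $M(K_4)$ is strong block isolating, which is a single finite check over at most $3^6$ (resp.\ $4^6$) labelings. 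For $G=\Z_4$, I would enumerate, among the $940$ non-isomorphic rank-$4$ matroids on $8$ elements \cite{matsumoto2012}, those that are block matroids — i.e.\ whose ground set decomposes into two disjoint bases — and for each test all $4^8=65536$ labelings, checking for each labeling whether the label of some block $B$ (as a multiset sum) is attained by another block. In every instance the answer should be that no block is strongly isolated, so \Cref{prop:strongblockisolation} yields strong $(D(G)-1)$-closeness, which is at least as strong as \Cref{conj:optimization} for these groups; collecting the cases gives the claim.

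The main obstacle is making the finite verification trustworthy and appropriately small: the brute-force double loop (over block matroids and over labelings) is conceptually routine but must be organized so that the computation is actually feasible and so that the reduction is airtight — in particular one must correctly enumerate which rank-$4$ size-$8$ matroids admit a partition into two bases (this is the bottleneck combinatorially, since the matroid database must be filtered by the block property), and one must be careful that "strongly isolates a block" is tested as a statement about blocks only, not about all bases. Once the enumeration is set up, symmetry reductions (relabeling $G$ by automorphisms, and matroid automorphisms) can be used to shrink the search, but they are not strictly necessary at these sizes. Assuming the computation is carried out as described and returns no strong block isolating labeling in any case, \Cref{prop:strongblockisolation} completes the proof.
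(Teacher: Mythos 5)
Your proposal is correct and follows essentially the same route as the paper: reduce via \Cref{thm:mim_counterexample_weight}/\Cref{prop:strongblockisolation} to rank-$D(G)$ block matroids, dispatch the strongly base orderable ones with \Cref{thm:stronglyorderable} (leaving only $M(K_4)$ for $\Z_3$ and $\Z_2\times\Z_2$), and finish $\Z_4$ by a finite computer check of strong block isolation over the block matroids among the $940$ rank-$4$, size-$8$ matroids with all $4^8$ labelings. This matches the paper's argument, including the computational verification it reports.
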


\subsubsection*{Acknowledgements.} We thank the reviewers for their valuable feedback and Andr\'{a}s Imolay for noticing and correcting an error in \Cref{thm:Schrijver_Seymour_pq}.

\bibliographystyle{plain}
\bibliography{reference}

\begin{thebibliography}{10}

\bibitem{ArtmannWZ17}
Stephan Artmann, Robert Weismantel, and Rico Zenklusen.
\newblock A strongly polynomial algorithm for bimodular integer linear
  programming.
\newblock In {\em Proceedings of the 49th Annual ACM SIGACT Symposium on Theory
  of Computing}, STOC 2017, page 1206–1219, New York, NY, USA, 2017.
  Association for Computing Machinery.

\bibitem{baayen1968een}
PC~Baayen.
\newblock Een combinatorisch probleem voor eindige abelse groepen.
\newblock In {\em Math. Centrum Syllabus 5, Colloquium Discrete Wiskunde
  Caput}, volume~3, 1968.

\bibitem{barahonaExactArborescencesMatchings1987}
Francisco Barahona and William~R Pulleyblank.
\newblock Exact arborescences, matchings and cycles.
\newblock 16(2):91--99.

\bibitem{Bellman1956}
Richard Bellman.
\newblock Notes on the theory of dynamic programming iv - maximization over
  discrete sets.
\newblock {\em Naval Research Logistics Quarterly}, 3(1-2):67--70, 1956.

\bibitem{blikstad:LIPIcs.ICALP.2021.31}
Joakim Blikstad.
\newblock {Breaking $O(nr)$ for Matroid Intersection}.
\newblock In Nikhil Bansal, Emanuela Merelli, and James Worrell, editors, {\em
  48th International Colloquium on Automata, Languages, and Programming (ICALP
  2021)}, volume 198 of {\em Leibniz International Proceedings in Informatics
  (LIPIcs)}, pages 31:1--31:17, Dagstuhl, Germany, 2021. Schloss Dagstuhl --
  Leibniz-Zentrum f{\"u}r Informatik.

\bibitem{BrakensiekGG22}
Joshua Brakensiek, Sivakanth Gopi, and Venkatesan Guruswami.
\newblock Constraint {Satisfaction} {Problems} with {Global} {Modular}
  {Constraints}: {Algorithms} and {Hardness} via {Polynomial}
  {Representations}.
\newblock {\em SIAM Journal on Computing}, 51(3):577--626, June 2022.

\bibitem{brualdi_1969}
Richard~A. Brualdi.
\newblock Comments on bases in dependence structures.
\newblock {\em Bulletin of the Australian Mathematical Society},
  1(2):161–167, 1969.

\bibitem{multi-constrained_1989}
P.~M. Camerini, F.~Maffioli, and C.~Vercellis.
\newblock Multi-constrained matroidal knapsack problems.
\newblock {\em Mathematical Programming}, 45(1):211--231, August 1989.

\bibitem{matroidalknapsack}
Paolo~M. Camerini and Carlo Vercellis.
\newblock The matroidal knapsack: A class of (often) well-solvable problems.
\newblock {\em Operations Research Letters}, 3(3):157--162, 1984.

\bibitem{CameriniGM13}
P.M Camerini, G~Galbiati, and F~Maffioli.
\newblock Random pseudo-polynomial algorithms for exact matroid problems.
\newblock 13(2):258--273.

\bibitem{davenport1966midwestern}
H~Davenport.
\newblock Proceedings of the midwestern conference on group theory and number
  theory.
\newblock {\em Ohio State University}, 1966.

\bibitem{devos2009generalization}
Matt DeVos, Luis Goddyn, and Bojan Mohar.
\newblock A generalization of {Kneser}'s addition theorem.
\newblock {\em Advances in Mathematics}, 220(5):1531--1548, 2009.

\bibitem{doronarad2023tight}
Ilan Doron-Arad, Ariel Kulik, and Hadas Shachnai.
\newblock Tight lower bounds for weighted matroid problems, 2023.

\bibitem{elmaalouly_et_al:LIPIcs.ISAAC.2023.28}
Nicolas El~Maalouly, Raphael Steiner, and Lasse Wulf.
\newblock {Exact Matching: Correct Parity and FPT Parameterized by Independence
  Number}.
\newblock In Satoru Iwata and Naonori Kakimura, editors, {\em 34th
  International Symposium on Algorithms and Computation (ISAAC 2023)}, volume
  283 of {\em Leibniz International Proceedings in Informatics (LIPIcs)}, pages
  28:1--28:18, Dagstuhl, Germany, 2023. Schloss Dagstuhl -- Leibniz-Zentrum
  f{\"u}r Informatik.

\bibitem{horsch2024problems}
Florian Hörsch, András Imolay, Ryuhei Mizutani, Taihei Oki, and Tamás
  Schwarcz.
\newblock Problems on group-labeled matroid bases, 2024.

\bibitem{ingleton1977transversal}
Aubrey~W Ingleton.
\newblock Transversal matroids and related structures.
\newblock {\em Higher Combinatorics: Proceedings of the NATO Advanced Study
  Institute held in Berlin (West Germany), September 1--10, 1976}, pages
  117--131, 1977.

\bibitem{matsumoto2012}
Yoshitake Matsumoto, Sonoko Moriyama, Hiroshi Imai, and David Bremner.
\newblock Matroid enumeration for incidence geometry.
\newblock 47(1):17--43.

\bibitem{MAYHEW2008415}
Dillon Mayhew and Gordon~F. Royle.
\newblock Matroids with nine elements.
\newblock {\em Journal of Combinatorial Theory, Series B}, 98(2):415--431,
  2008.

\bibitem{10.1145/28395.383347}
Ketan Mulmuley, Umesh~V. Vazirani, and Vijay~V. Vazirani.
\newblock Matching is as easy as matrix inversion.
\newblock In {\em Proceedings of the Nineteenth Annual ACM Symposium on Theory
  of Computing}, STOC '87, page 345–354, New York, NY, USA, 1987. Association
  for Computing Machinery.

\bibitem{naegele_2023_advances}
Martin N{\"a}gele, Christian N{\"o}bel, Richard Santiago, and Rico Zenklusen.
\newblock Advances on strictly {$\Delta$-Modular} {IPs}.
\newblock In {\em Proceedings of the 24th Conference on Integer Programming and
  Combinatorial Optimization (IPCO~'23)}, pages 393--407, 2023.

\bibitem{NageleSZ22}
Martin Nägele, Richard Santiago, and Rico Zenklusen.
\newblock {\em Congruency-Constrained TU Problems Beyond the Bimodular Case},
  pages 2743--2790.

\bibitem{nagele_submodular_2019}
Martin Nägele, Benny Sudakov, and Rico Zenklusen.
\newblock Submodular {Minimization} {Under} {Congruency} {Constraints}.
\newblock {\em Combinatorica}, 39(6):1351--1386, December 2019.

\bibitem{olson1969combinatorial}
John~E Olson.
\newblock A combinatorial problem on finite abelian groups, {I}.
\newblock {\em Journal of Number Theory}, 1(1):8--10, 1969.

\bibitem{PapadimitriouY82}
Christos~H. Papadimitriou and Mihalis Yannakakis.
\newblock The complexity of restricted spanning tree problems.
\newblock {\em J. ACM}, 29(2):285–309, apr 1982.

\bibitem{schrijver1990spanning}
Alexander Schrijver and Paul~D Seymour.
\newblock Spanning trees of different weights.
\newblock In {\em Polyhedral Combinatorics}, page 281, 1990.

\bibitem{van1969combinatorial}
P.~van Emde~Boas and D.~Kruyswijk.
\newblock {\em A Combinatorial Problem on Finite Abelian Groups III}.
\newblock Mathematisch Centrum, Amsterdam. Afdeling Zuivers Wiskunde. Stichting
  Mathematisch Centrum, 1969.

\bibitem{WebbKerri2004}
{Webb, Kerri}.
\newblock {\em Counting Bases}.
\newblock PhD thesis, University of Waterloo, 2004.

\end{thebibliography}
\end{document}